\newcommand{\es}{\varnothing}
\renewcommand{\TH}{\mathfrak{T}\!\mathfrak{H}}
\newcommand{\FPT}{F\mspace{-1.5mu}P\mspace{-1.5mu}T}
\title{{\sc On the Threshold-Width of Graphs}}
\author{
 Maw-Shang~Chang\inst{1}
\and
 Ling-Ju~Hung\inst{1}
\and
 Ton~Kloks%
\thanks{This author is supported by the
National Science Council of Taiwan, under grants
NSC~98--2218--E--194--004 and NSC~98--2811--E--194--006.} \and
 Sheng-Lung~Peng\inst{2}
}
\institute{
 Department of Computer Science and Information Engineering\\
 National Chung Cheng University\\
 Min-Hsiung, Chia-Yi 621, Taiwan\\
 {\tt (mschang,hunglc)@cs.ccu.edu.tw}
\and
 Department of Computer Science and Information Engineering\\
 National Dong Hwa University\\
 Shoufeng, Hualien 97401, Taiwan\\
 {\tt slpeng@mail.ndhu.edu.tw}
}
\begin{document}

\maketitle

\begin{abstract}
The $\mathcal{G}$-width of a class of graphs $\mathcal{G}$ is defined
as follows. A graph $G$ has $\mathcal{G}$-width $k$ if there are
$k$ independent sets $\mathbb{N}_1,\dots,\mathbb{N}_k$ in $G$ such that
$G$ can be embedded into a graph $H \in \mathcal{G}$ such that
for every edge $e$ in $H$ which is not an edge in $G$, there exists an
$i$ such that both endpoints of $e$ are in $\mathbb{N}_i$.
For the class $\TH$ of threshold graphs we show that
$\TH$-width is NP-complete and we present fixed-parameter
algorithms. We also show that for each $k$, graphs of $\TH$-width
at most $k$ are characterized by a finite collection of forbidden
induced subgraphs.
\end{abstract}

\section{Introduction}

\begin{definition}
\label{def G-width}
Let $\mathcal{G}$ be a class of graphs which contains all
cliques. The {\em $\mathcal{G}$-width\/} of a graph $G$
is the minimum number $k$ of independent sets
$\mathbb{N}_1,\dots,\mathbb{N}_k$ in $G$ such that there exists
an embedding $H \in \mathcal{G}$ of $G$ such that for
every edge $e=(x,y)$ in $H$ which is not an edge of $G$ there
exists an $i$ with $x,y \in \mathbb{N}_i$.
\end{definition}

We restrict the $\mathcal{G}$-width parameter to classes of
graphs that contain all cliques to ensure that it is well-defined
for every (finite) graph.

In this paper we investigate the width-parameter for the
class $\TH$ of threshold graphs and henceforth we call it
the {\em threshold-width\/} or {\em $\TH$-width\/}.
If a graph $G$ has threshold-width $k$ then we call $G$
also a {\em $k$-probe threshold graph\/}. We refer to the
{\em partitioned case\/} of the problem when the collection
of independent sets $\mathbb{N}_i$, $i=1,\dots,k$, which are
not necessarily disjoint, is a part
of the input.
A collection of independent sets $\mathbb{N}_i$, $i=1,\dots,k$,
is a {\em witness\/} for a partitioned graph.
For historical reasons we call the set of vertices
$\mathbb{P}=V-\bigcup_{i=1}^k \mathbb{N}_i$ the set of {\em probes\/}
and the vertices of $\bigcup_{i=1}^k \mathbb{N}_i$ the set of
{\em nonprobes\/}.

Threshold graphs were introduced in~\cite{kn:chvatal}
using a concept called `threshold dimension.'
There is a lot of information about threshold graphs in the
book~\cite{kn:mahadev}, and
there are chapters on threshold graphs
in the book~\cite{kn:golumbic2} and in the
survey~\cite{kn:brandstadt}.

We may take the following characterization as a
definition~\cite{kn:chvatal,kn:brandstadt}.

\begin{definition}
\label{def threshold}
A graph $G$ is a {\em threshold graph\/} if $G$ and its
complement $\Bar{G}$ are trivially perfect. Equivalently,
$G$ is a threshold graphs if $G$ has no induced
$P_4$, $C_4$, nor $2K_2$.
\end{definition}

We end this section with some notational conventions. For two sets
$A$ and $B$ we write $A+B$ and $A-B$ instead of $A \cup B$ and $A
\setminus B$. We write $A \subseteq B$ if $A$ is a subset of $B$
with possible equality and we write $A \subset B$ if $A$ is a subset
of $B$ and $A \neq B$. For a set $A$ and an element $x$ we write
$A+x$ instead of $A+\{x\}$ and $A-x$ instead of $A - \{x\}$. In
those cases we will make it clear in the context that $x$ is an
element and not a set.

A graph $G$ is a pair $G=(V,E)$ where the elements of $V$
are called the vertices of $G$ and where $E$ is a set of
two-element subsets of $V$, called the edges. We denote
edges of a graph as $(x,y)$ and we call $x$ and $y$ the
endvertices of the edge.
For a vertex $x$ we write $N(x)$ for its set of neighbors
and for $W \subseteq V$ we write
$N(W)=\bigcup_{x \in W} N(x)-W$ for the neighbors of $W$.
We write $N[x]=N(x)+x$ for the closed neighborhood of $x$.
For a subset $W$ we write $N[W]=N(W)+W$. Usually we will use $n=|V|$
to denote the number of vertices of $G$ and we will use
$m=|E|$ to denote the number
of edges of $G$.

For a graph $G=(V,E)$ and a subset $S \subseteq V$ of vertices
we write $G[S]$ for the subgraph {\em induced\/} by $S$, that is
the graph with $S$ as its set of vertices and with those
edges of $E$ that have both endvertices in $S$. For a subset
$W \subseteq V$ we will write $G-W$ for the graph $G[V-W]$ and
for a vertex $x$ we will write $G-x$ rather than $G-\{x\}$.
We will usually denote graph classes by calligraphic capitals.

In the next section we show that the class of graphs with
$\TH$-width at most $k$ is characterized by a finite collection of
forbidden induced subgraphs.

\section{A finite characterization}

A graph is a threshold graph if and only if it has no induced
$P_4$, $C_4$, or $2K_2$. We show that for any $k$, the class of
graphs with $\TH$-width at most $k$ is characterized by a finite
collection of forbidden induced subgraphs.

\begin{lemma}
\label{treedec}
A graph $G$ is a threshold graph if and only if it has a
binary tree-decomposition $(T,f)$, where $f$ is a bijection
from the vertices of $G$ to the leaves of $T$. Every internal
node of $T$, including the root is labeled either as a join--
or a union-node. For every internal node the right subtree
consists of a single leaf. Two vertices are adjacent
in $G$ if their lowest common ancestor in $T$ is a join-node.
\end{lemma}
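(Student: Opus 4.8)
The plan is to connect the tree-decomposition of the statement to the classical construction characterization of threshold graphs: a graph is threshold if and only if it can be built from a single vertex by repeatedly adding a vertex that is either \emph{isolated} (adjacent to nothing added so far) or \emph{universal} (adjacent to everything added so far). First I would record this characterization as a consequence of Definition~\ref{def threshold}. The forbidden subgraphs $P_4$, $C_4$, and $2K_2$ guarantee that every threshold graph with at least one vertex has an isolated or a universal vertex, and since the property of excluding these three graphs as induced subgraphs is hereditary, removing such a vertex leaves a threshold graph; induction then yields the construction ordering. The converse is immediate, since adjoining an isolated or a universal vertex cannot create an induced $P_4$, $C_4$, or $2K_2$.

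Next I would observe that the prescribed tree shape---a binary tree in which the right subtree of every internal node is a single leaf---is exactly a caterpillar whose internal nodes form a path, the \emph{left spine}, with one leaf attached on the right at each internal node and one leaf at the bottom. Reading this spine from the root downward lists the vertices in reverse construction order: the rightmost leaf at an internal node is the vertex added at the corresponding step, the vertices below it on the spine are those added earlier, and the label of the node records how it was added, namely a \emph{join-node} for a universal vertex and a \emph{union-node} for an isolated vertex.

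For the forward direction I would take a construction ordering $v_1,\dots,v_n$ of a threshold graph $G$ and build $(T,f)$ by induction on $n$: place $v_n$ as the rightmost leaf of the root, label the root a join-node if $v_n$ is universal and a union-node if $v_n$ is isolated, and let the left subtree be the tree obtained inductively for $G-v_n$. The key verification is the adjacency rule: for any earlier vertex $v_i$ the lowest common ancestor of $f(v_i)$ and $f(v_n)$ is the root, which is a join-node precisely when $v_n$ is universal, that is, precisely when $v_i$ and $v_n$ are adjacent; adjacencies among $v_1,\dots,v_{n-1}$ are determined by lowest common ancestors lying strictly inside the left subtree and are therefore unaffected. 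The reverse direction runs the same induction backwards: given such a tree, the rightmost leaf of the root is universal or isolated according to the root's label, so removing it and applying the induction hypothesis to the left subtree shows $G-v_n$ is threshold, and reattaching a universal or isolated vertex keeps $G$ threshold.

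The main obstacle is the bookkeeping that makes the two inductions line up. I must check that deleting the root's rightmost leaf and contracting the root really leaves a tree of the same special shape on $G-v_n$, and that the lowest-common-ancestor relation restricted to the remaining leaves is unchanged. This is routine, but it is the point where the argument could go wrong if the tree shape or the labelling convention were specified loosely.
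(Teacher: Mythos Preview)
Your proposal is correct and follows the same idea as the paper: both hinge on the characterization that a graph is threshold if and only if every induced subgraph has an isolated or a universal vertex (the paper cites this as Theorem~\ref{char threshold}, whereas you rederive it from the forbidden-subgraph definition). The paper's proof is a one-line appeal to that theorem, leaving the inductive construction of the caterpillar tree and the LCA bookkeeping implicit; you have simply made those details explicit.
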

\begin{proof}
According to Theorem~\vref{char threshold}
a graph is a threshold graph if and only if every induced subgraph
has either a isolated vertex or a universal vertex.
This proves the lemma.
\qed\end{proof}

\begin{theorem}
For every $k$ the class of graphs with $\TH$-width at most
$k$ is characterized by a finite collection of forbidden
induced subgraphs.
\end{theorem}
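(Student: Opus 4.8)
The plan is to prove the statement by first observing that the class $\mathcal{C}_k$ of graphs with $\TH$-width at most $k$ is closed under taking induced subgraphs, and then bounding the number of vertices of its \emph{minimal} forbidden induced subgraphs. For heredity, suppose $G$ has a witness consisting of independent sets $\mathbb{N}_1,\dots,\mathbb{N}_k$ together with a threshold embedding $H$, and let $S \subseteq V(G)$. Since threshold graphs are themselves defined by forbidden induced subgraphs (Definition~\ref{def threshold}), $H[S]$ is again a threshold graph; the sets $\mathbb{N}_i \cap S$ are independent in $G[S]$; and every edge of $H[S]$ missing from $G[S]$ has both endvertices in some $\mathbb{N}_i \cap S$. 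Hence $G[S] \in \mathcal{C}_k$, so $\mathcal{C}_k$ is hereditary, and is therefore characterized by the set $\mathcal{F}_k$ of its minimal forbidden induced subgraphs (those $G \notin \mathcal{C}_k$ with $G-v \in \mathcal{C}_k$ for every vertex $v$). As there are only finitely many graphs on any fixed number of vertices, it suffices to produce a function $N(k)$ bounding the order of every member of $\mathcal{F}_k$.

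Assume for contradiction that $G \in \mathcal{F}_k$ has more than $N(k)$ vertices, where $N(k)$ is a Ramsey-type number to be fixed. Fix any vertex $v$; by minimality $G-v \in \mathcal{C}_k$, so there is a witness: independent sets $\mathbb{N}_1,\dots,\mathbb{N}_k$ and a threshold embedding $H$ of $G-v$. By Lemma~\ref{treedec}, $H$ is produced by a creation sequence in which each vertex is introduced as a \emph{union} (isolated) or a \emph{join} (universal) vertex relative to the vertices introduced before it; this linearly orders $V(G)-v$ and makes the neighborhoods in $H$ nested. Color each $u \in V(G)-v$ by the pair $\tau(u)=(\ell(u),I(u))$, where $\ell(u)\in\{\text{union},\text{join}\}$ is its creation label and $I(u)=\{\, i : u\in\mathbb{N}_i \,\}\subseteq\{1,\dots,k\}$ records its set-memberships; there are at most $2^{k+1}$ such colors. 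Refining this with a second application of Ramsey's theorem to the pairs of $V(G)-v$ colored by adjacency in $G$, we extract --- provided $N(k)$ is large enough --- a set $W\subseteq V(G)-v$ that is monochromatic for $\tau$ and homogeneous in $G$ (either a clique or an independent set).

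The final step is to show that such a large, uniform block $W$ contains a \emph{redundant} vertex, contradicting minimality. Because the vertices of $W$ share a creation label, share their memberships $I(\cdot)$, and are mutually homogeneous in $G$, the nesting of neighborhoods in $H$ restricted to $W$ forces them to behave as near-twins: inside $W$ the only edges of $H$ that are missing from $G$ lie within a common $\mathbb{N}_i$, and toward the rest of the graph the members of $W$ differ only along the nested order. One then argues that once $|W|$ exceeds a bound depending only on $k$, the removed vertex $v$ can be reinserted into the creation sequence alongside $W$ and assigned the memberships forced by $W$, yielding a witness for all of $G$ and contradicting $G\notin\mathcal{C}_k$; equivalently, some $w\in W$ may be deleted so that $G-w\notin\mathcal{C}_k$, contradicting minimality. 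I expect this redundancy step to be the crux: the difficulty is that a threshold embedding couples a \emph{global} nesting constraint with the \emph{local}, per-set constraint that every non-edge of $G$ realized as an edge of $H$ must fall inside a single $\mathbb{N}_i$, so one must verify that a uniform block can absorb the extra vertex without violating either constraint. Carrying this out makes $N(k)$ explicit and completes the proof.
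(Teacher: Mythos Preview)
Your heredity argument is fine, but the proof breaks down at exactly the point you flag as the crux. The Ramsey extraction yields a set $W$ whose members share a creation label $\ell$, share their membership set $I$, and are mutually homogeneous in $G$ --- but none of this controls how the vertices of $W$ sit \emph{relative to the rest of $G-v$}. Two join-vertices with identical $I$ can occupy very different positions in the creation order and hence have different $H$-neighbourhoods (and therefore different $G$-neighbourhoods) among vertices outside $W$; so $W$ need not be a module of $G-v$, and no twin-reduction argument applies. Reinserting $v$ ``alongside $W$'' fares no better: for $v$ to enter the threshold order you need $N_G(v)$, after symmetric-differencing with some union of the $\mathbb{N}_i$, to fit the nested structure of $H$, yet $N_G(v)$ restricted to $V(G)\setminus(W\cup\{v\})$ is completely unconstrained by your Ramsey step (indeed, your colouring $\tau$ does not even record adjacency to $v$). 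Finally, the two conclusions you call ``equivalent'' are not equivalent; they are independent routes to a contradiction, and neither is established.

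The paper sidesteps this difficulty by not trying to bound $|V(G)|$ at all. It assumes an infinite family $G_1,G_2,\ldots$ of minimal forbidden induced subgraphs, deletes one vertex $r_i$ from each, encodes the resulting witness for $G_i-r_i$ as a labelled binary tree via Lemma~\ref{treedec} (each leaf carries both the $0/1$-membership vector \emph{and} a bit for adjacency to $r_i$, and each internal node carries its join/union label), and then applies Kruskal's tree theorem. The resulting label- and LCA-preserving embedding of some $T_i$ into $T_j$ is precisely an induced-subgraph embedding of $G_i$ into $G_j$, contradicting minimality. The point is that well-quasi-ordering compares \emph{two} structures rather than searching for internal redundancy in one, and so never has to reconcile the global nesting constraint with the local per-set constraints that you correctly identify as the obstacle.
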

\begin{proof}
To prove this theorem we use the technique
introduced by Pouzet~\cite{kn:pouzet}.

Obviously, the class of graphs with $\TH$-width at most $k$
is hereditary. Let $k$ be fixed. Assume that the class of
$\TH$-width at most $k$ has an infinite collection of
minimal forbidden induced subgraphs, say $G_1,G_2,\ldots$.
In each $G_i$ single out one vertex $r_i$ and let
$G_i^{\prime}=G_i-r_i$. Then $G_i^{\prime}$ has
$\TH$-width at most $k$, thus there are independent sets
$\mathbb{N}^{(i)}_1,\ldots,\mathbb{N}^{(i)}_k$ in $G_i^{\prime}$
such that $G_i^{\prime}$ can be embedded into a threshold graph $H_i$
by adding certain edges between vertices that are pairwise contained
in some $\mathbb{N}^{(i)}_{\ell}$. For each $i$ consider a
binary tree-decomposition $(T_i,f_i)$
for $H_i$ as stipulated in Lemma~\ref{treedec}.
Each leaf is labeled by a $0/1$-vector with $k$ entries.
The $j^{\mathrm{th}}$ entry of this vector is equal to 0 or 1
according to whether the vertex is contained in $\mathbb{N}_j^{(i)}$
or not. Thus two vertices are adjacent in $G_i^{\prime}$ if and only
if their lowest common ancestor is a join-node and their vectors
are disjoint.

We give each leaf an additional $0/1$-label that indicates whether
the vertex that is mapped to that leaf is adjacent to $r_i$ or not.

Kruskal's theorem~\cite{kn:kruskal} states that binary trees,
with points labeled by a well-quasi-ordering are well-quasi-ordered
with respect to their lowest common ancestor embedding.
When we apply Kruskal's theorem to the
labeled binary trees $T_i$ that represent the graphs $G_i^{\prime}$
we may conclude that there exist $i < j$
such that $G_i^{\prime}$ is an induced subgraph of $G_j^{\prime}$.
But then we must also have that $G_i$ is an induced subgraph of
$G_j$. This is a contradiction because
we assume that the graphs $G_i$ are
minimal forbidden induced subgraphs.
This proves the theorem.
\qed\end{proof}

\section{$\TH$-width is fixed-parameter tractable}

In this section we show that for constant $k$,
$k$-probe threshold graphs can be
recognized in $O(n^3)$ time.

The following is one of the many characterizations of threshold
graphs.

\begin{theorem}[\cite{kn:chvatal2,kn:harary,kn:manca,kn:orlin}]
\label{char threshold}
A graph is a threshold graph if and only if
every induced subgraph has an isolated vertex or
a universal vertex.%
\footnote{A vertex $x$ of a graph $G$ is
{\em isolated\/} if its neighborhood is the empty set. A vertex
$x$ of a graph $G=(V,E)$ is {\em universal\/} if $N[x]=V$.}
\end{theorem}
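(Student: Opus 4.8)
The plan is to prove the two directions separately, taking \emph{threshold graph} to mean a graph with no induced $P_4$, $C_4$, or $2K_2$, as in Definition~\ref{def threshold}. The easy direction is the ``if'' part. Since these three graphs are exactly the forbidden configurations, I would simply observe that each of them has neither an isolated nor a universal vertex: each has four vertices, yet every vertex has degree at least $1$ and at most $2$, whereas a universal vertex would need degree $3$. Hence if $G$ contained any of $P_4$, $C_4$, $2K_2$ as an induced subgraph, that very subgraph would be an induced subgraph with no isolated and no universal vertex, contradicting the hypothesis. Therefore a graph in which every induced subgraph has an isolated or a universal vertex contains none of the three, and is a threshold graph.

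For the ``only if'' part, since the class of threshold graphs is hereditary (its defining property forbids induced subgraphs), it suffices to prove that every threshold graph on at least one vertex has an isolated or a universal vertex; applying this to each induced subgraph then finishes the proof. The key tool I would establish first is a \emph{nested-neighbourhood lemma}: in a graph with no induced $P_4$, $C_4$, or $2K_2$, for any two vertices $x$ and $y$ one has $N(x)-y \subseteq N(y)-x$ or $N(y)-x \subseteq N(x)-y$. To prove it I assume incomparability, pick $a \in N(x)-N[y]$ and $b \in N(y)-N[x]$ (note $a \neq b$), and then run a short case analysis on whether $x$ and $y$ are adjacent and whether $a$ and $b$ are adjacent; in the four cases the vertex set $\{x,y,a,b\}$ induces one of $2K_2$, $P_4$, or $C_4$, a contradiction.

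Given nested neighbourhoods, I would finish with an extremal-degree argument. Let $u$ be a vertex of maximum degree. A counting check shows that for every vertex $y$ the comparability must point the right way, namely $N(y)-u \subseteq N(u)-y$: otherwise the reverse strict containment $N(u)-y \subset N(y)-u$ would, after accounting for the at-most-one-element corrections, force $\deg(u) < \deg(y)$, contradicting maximality. Now suppose $G$ has no isolated vertex and assume for contradiction that $u$ is not universal, so some $w$ is non-adjacent to $u$. As $w$ is not isolated it has a neighbour $z$, and $z \neq u$; then $w \in N(z)-u \subseteq N(u)-z$, which says $w$ is adjacent to $u$, a contradiction. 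Hence the maximum-degree vertex is universal whenever no vertex is isolated, which is precisely the statement that $G$ has an isolated or a universal vertex.

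I expect the main obstacle to be the forward direction, and within it the bookkeeping that links the two halves: one must keep careful track of the $-y$ and $-u$ corrections so that the degree inequalities in the extremal step are exact, and must verify that the four small configurations in the lemma really are \emph{induced}, not merely subgraphs. Once nested neighbourhoods are in hand the extremal step is short, so the crux is really the clean reduction of an incomparable pair of neighbourhoods to one of the three forbidden induced subgraphs.
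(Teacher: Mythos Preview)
Your argument is correct. Note, however, that the paper does not give its own proof of Theorem~\ref{char threshold}: it is stated as a known characterization with citations to the literature, so there is nothing in the paper to compare your proof against directly.

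It is worth observing that your nested-neighbourhood lemma is precisely (one direction of) the characterization the paper later quotes, again without proof, as Theorem~\ref{chain char}: the condition $N(x)-y\subseteq N(y)-x$ is equivalent to $N(x)\subseteq N[y]$, since $x\notin N(x)$ means that removing $x$ from the right-hand side costs nothing. Thus your write-up in effect supplies self-contained proofs of both cited characterizations at once, first deriving neighbourhood comparability from the forbidden-subgraph definition via the four-case analysis on $\{x,y,a,b\}$, and then deducing the isolated-or-universal statement from it by the maximum-degree argument. Both steps are clean; the degree bookkeeping in the extremal step goes through once one splits on whether $u$ and $y$ are adjacent, which makes the strict inequality $\deg(u)<\deg(y)$ immediate in either case.
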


The following theorem is a monadic second-order characterization.
Problems that can be formulated in monadic second-order
logic can be solved
on graphs that have bounded rankwidth~\cite{kn:courcelle3}.
We will show that $k$-probe
threshold graphs have bounded rankwidth shortly.

\begin{theorem}
\label{char}
A graph $G=(V,E)$ has threshold-width at most $k$ if and only if
there exist $k$ independent sets $\mathbb{N}_i$, $i=1,\dots,k$,
such that for every $W \subseteq V$, $G[W]$ has an isolated
vertex or a vertex $\omega$ such that for every $y \in W-\omega$
either $\omega$ is adjacent to $y$ or there exists
$i \in \{1,\dots,k\}$
with $\{\omega,y\} \subseteq \mathbb{N}_i$.
\end{theorem}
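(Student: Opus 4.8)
The plan is to connect the definition of threshold-width directly to the elimination characterization of Theorem~\ref{char threshold}. Throughout, fix a family of independent sets $\mathbb{N}_1,\dots,\mathbb{N}_k$ and call two vertices $x,y$ \emph{linkable} when $\{x,y\}\subseteq\mathbb{N}_i$ for some $i$; the edges that an embedding is permitted to add are exactly the linkable non-edges of $G$. The condition in the statement then says precisely that in every $G[W]$ there is an isolated vertex, or a vertex $\omega$ all of whose non-neighbours in $W$ are linkable to it.

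For the forward direction I would start from a threshold embedding $H$ with witness $\mathbb{N}_1,\dots,\mathbb{N}_k$ and fix $W\subseteq V$. By Theorem~\ref{char threshold}, $H[W]$ has an isolated vertex or a universal vertex. A universal vertex $\omega$ of $H[W]$ is adjacent in $H$ to every $y\in W-\omega$, and since each such edge is either an edge of $G$ or a linkable pair, $\omega$ witnesses the second alternative. If instead $H[W]$ has an isolated vertex $v$, then because $G$ is a subgraph of $H$ the vertex $v$ is already isolated in $G[W]$, giving the first alternative. This settles the \emph{only if} part.

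For the converse the key idea is that an embedding need not add \emph{all} linkable non-edges; it may add any subset. I would build $H$ by a peeling construction that follows the hypothesis. Setting $W_0=V$, at stage $t$ I apply the assumption to $G[W_t]$: if it has a vertex $\omega$ whose $W_t$-non-neighbours are all linkable to it, I insert into $H$ every edge from $\omega$ to its current non-neighbours (each of them linkable, hence legal) so that $\omega$ becomes universal in $H[W_t]$; if instead $G[W_t]$ has an isolated vertex $v$, I add no edge incident to $v$, so $v$ stays isolated in $H[W_t]$. In either case I set $W_{t+1}=W_t-x_t$ for the chosen vertex $x_t$ and recurse. Each pair of vertices has its adjacency in $H$ decided exactly once, namely when the first of the two is removed, so $H$ is well defined, contains $G$, and every added edge is linkable; thus $\mathbb{N}_1,\dots,\mathbb{N}_k$ is a legitimate witness.

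Finally I would observe that the removal order $x_0,x_1,\dots$ is, read backwards, a creation sequence for $H$: each $x_t$ is isolated or universal in $H[W_t]$, which is exactly one of the equivalent forms of Theorem~\ref{char threshold}, so $H$ is a threshold graph and $G$ has $\TH$-width at most $k$. The one point that needs care, and the place where a naive argument fails, is this converse: completing \emph{every} linkable non-edge can destroy an isolated vertex of some $G[W]$ and need not yield a threshold graph. Exploiting the freedom to withhold edges, and committing each vertex's incident edges only in elimination order, is what makes the construction go through.
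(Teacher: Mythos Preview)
Your argument is correct and follows exactly the route the paper intends: the paper's own proof is the single sentence ``This is inferred by Theorem~\ref{char threshold} and Definition~\ref{def G-width},'' and your write-up simply unpacks those two ingredients. Your peeling construction for the converse, together with the observation that blindly adding \emph{all} linkable non-edges can fail (e.g.\ the empty graph on four vertices with $\mathbb{N}_1=\{a,b\}$, $\mathbb{N}_2=\{c,d\}$ satisfies the hypothesis but completes to $2K_2$), makes explicit a point the paper leaves to the reader.
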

\begin{proof}
This is inferred by Theorem~\ref{char threshold}
and Definition~\ref{def G-width}.
\qed\end{proof}

\begin{definition}[\cite{kn:oum,kn:oum4}]
A {\em rank-decomposition\/} of a graph
$G=(V,E)$ is a pair $(T,\tau)$
where $T$ is a ternary tree and $\tau$ a bijection
from the leaves of $T$ to the vertices of $G$.
Let $e$ be an edge in $T$ and consider the two
sets $A$ and $B$ of leaves of the two subtrees of $T-e$.
Let $M_e$ be the submatrix of the adjacency matrix of
$G$ with rows indexed by the vertices of $A$ and columns
indexed by the vertices of $B$. The {\em width of $e$\/} is the
rank over $GF(2)$ of $M_e$. The {\em width of $(T,\tau)$\/}
is the maximum width over all edges $e$ in $T$ and
the {\em rankwidth\/} of $G$ is the minimum width
over all rank-decompositions of $G$.
\end{definition}

Computing the rankwidth of a graph is NP-complete~\cite{kn:hlineny2}
but it is fixed-parameter
tractable. This can be seen in various ways:
~\cite{kn:oum3} Proves that there is
a finite obstruction set for fixed-parameter rankwidth.
Now, note that
Schrijver describes a
general algorithm to minimize a class of
submodular functions which uses the
ellipsoid method~\cite[Chapter 45]{kn:schrijver}.
He turns this into a `combinatorial
algorithm' for a seemingly larger class of submodular
functions, in~\cite{kn:schrijver2}.
Using this result,~\cite{kn:hlineny2} describes a combinatorial
fixed-parameter algorithm
for computing the rankwidth of matroids.

\begin{lemma}
Threshold graphs have rankwidth at most one.
\end{lemma}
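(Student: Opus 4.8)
The plan is to exhibit a single rank-decomposition $(T,\tau)$ whose every edge has width at most one, and the natural choice is a \emph{caterpillar}. First I would extract a linear order on the vertices from the structure guaranteed by Lemma~\ref{treedec} (equivalently, from Theorem~\ref{char threshold}, by repeatedly stripping off an isolated or a universal vertex). Concretely, the binary tree-decomposition of Lemma~\ref{treedec} has its internal nodes lying on a single left spine, each carrying one leaf as its right subtree; reading the leaves off down the spine yields an ordering $v_1,\dots,v_n$ of $V$ with the property that, for $i<j$, the lowest common ancestor of $v_i$ and $v_j$ is the spine node carrying $v_i$. Hence, for $i<j$, whether $v_i$ and $v_j$ are adjacent depends only on $i$: the vertex $v_i$ is adjacent to \emph{all} later vertices when its spine node is a join-node, and to \emph{none} of them when it is a union-node.

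Next I would turn this order into a rank-decomposition. Let $T$ be the caterpillar whose spine is a path of degree-three nodes, with the leaves $v_1,\dots,v_n$ attached along the spine in this order, and let $\tau$ be the resulting bijection from the leaves of $T$ to $V$. Every edge $e$ of $T$ then induces one of two kinds of cut of the leaf set. If $e$ is the pendant edge of a single leaf $v_i$, then the associated matrix $M_e$ has exactly one row, so its rank over $GF(2)$ is trivially at most one.

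The remaining edges are the spine edges, and here lies the only real computation. A spine edge separates a prefix $A=\{v_1,\dots,v_i\}$ from the suffix $B=\{v_{i+1},\dots,v_n\}$, so each row of $M_e$ is indexed by some $v_a$ with $a\le i$ and each column by some $v_b$ with $b>a$. By the first paragraph the entry $(v_a,v_b)$ depends only on $a$; thus each row of $M_e$ is either the all-ones vector (when the spine node of $v_a$ is a join-node) or the all-zeros vector (when it is a union-node). A $0/1$-matrix whose nonzero rows are all identical has rank at most one over $GF(2)$, so the width of every spine edge is at most one.

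Taking the maximum over all edges shows that $(T,\tau)$ has width at most one, whence the rankwidth of any threshold graph is at most one. The step to get right is the extraction and use of the vertex order, so that each prefix/suffix cut matrix has constant rows; once that is in place the rest is routine bookkeeping about the caterpillar, and there is no genuinely hard obstacle to overcome.
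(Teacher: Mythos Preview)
Your argument is correct. The caterpillar built from the spine order of Lemma~\ref{treedec} does give a rank-decomposition in which every cut matrix has constant rows, hence rank at most one over $GF(2)$; the case analysis (pendant edges versus spine edges) is clean, and the key observation that adjacency between $v_a$ and $v_b$ with $a<b$ depends only on $a$ is exactly what forces the rows of each prefix/suffix matrix to be all-ones or all-zeros.

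The paper, however, does not carry out any of this. Its proof is a two-line citation: the graphs of rankwidth at most one are precisely the distance-hereditary graphs, and every threshold graph is distance-hereditary. So the paper outsources both the upper bound and the structure to known classifications, whereas you give a direct, self-contained construction that never mentions distance-hereditary graphs. Your route is longer but more elementary and actually exhibits the decomposition; the paper's route is instantaneous but relies on a nontrivial external characterization of rankwidth one. A minor advantage of your version in the context of this paper is that the very same caterpillar is what gets reused implicitly in the next theorem, where the embedding $H$ is taken to have rankwidth one and the effect of the independent sets $\mathbb{N}_i$ on $M_e$ is analyzed.
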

\begin{proof}
The class of graphs with rankwidth at most 1 is exactly the
class of distance-hereditary graphs~\cite{kn:oum4}. Every
threshold graph is distance hereditary
(see, {\em e.g.\/},~\cite{kn:brandstadt,kn:harary,kn:manca}).
\qed\end{proof}

\begin{theorem}
$k$-Probe threshold graphs have rankwidth at most $2^k$.
\end{theorem}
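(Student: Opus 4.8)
The plan is to bound the rankwidth of a $k$-probe threshold graph $G$ by exhibiting an explicit rank-decomposition whose every edge-cut has rank at most $2^k$ over $GF(2)$. The natural starting point is the threshold embedding $H$ of $G$ together with the witnessing independent sets $\mathbb{N}_1,\dots,\mathbb{N}_k$. By Lemma~\ref{treedec}, $H$ has a binary tree-decomposition $(T,f)$ in which $f$ maps the vertices of $G$ bijectively to the leaves of $T$ and each internal node is a join- or union-node with a single-leaf right subtree. I would reuse this tree $T$ (viewed now as a rank-decomposition tree) and label each leaf by the $0/1$-vector in $\{0,1\}^k$ recording its membership in each $\mathbb{N}_i$, exactly as in the proof of the finite-characterization theorem.

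First I would fix an edge $e$ of $T$ and analyze the cut matrix $M_e$ of the \emph{original} graph $G$, with rows indexed by the leaf-vertices $A$ on one side and columns by the leaf-vertices $B$ on the other. The key observation is that adjacency in $G$ is determined by two pieces of data: adjacency in the threshold graph $H$ (governed by whether the lowest common ancestor is a join- or union-node), and whether the edge of $H$ is a genuine edge of $G$ or one of the added edges. An added edge $(x,y)$ of $H \setminus G$ occurs only when $x$ and $y$ lie together in some $\mathbb{N}_i$, i.e.\ their label-vectors share a common zero-coordinate. The plan is to show that, across the cut $e$, the adjacency pattern of $G$ factors through the label-vectors: two vertices $a \in A$ and $b \in B$ on opposite sides are adjacent in $G$ if and only if a function of the pair $(\text{label}(a),\text{label}(b))$ evaluates appropriately, where the threshold structure contributes a simple (low-rank) ``staircase'' pattern.

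Next I would bound the rank by counting distinct row-types. Because each leaf carries a label in the finite set $\{0,1\}^k$ of size $2^k$, all vertices of $A$ with the same label-vector produce \emph{identical} rows of $M_e$: their adjacency to any $b \in B$ depends only on the label of $a$ (through the join/union structure and the shared-zero condition) and not on the identity of $a$. Hence $M_e$ has at most $2^k$ distinct rows, so its $GF(2)$-rank is at most $2^k$. Here I would need to verify carefully that the threshold ``comparability'' part of the adjacency genuinely depends on $a$ only through its label, which requires exploiting the specific right-subtree-is-a-leaf shape of $(T,f)$ so that the cut separates the tree into a clean ``earlier/later'' structure. Since $e$ was arbitrary, the width of the whole decomposition is at most $2^k$, giving the claimed bound.

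The main obstacle I anticipate is precisely this last verification: making rigorous the claim that two vertices sharing a label-vector are interchangeable across every cut. The join/union labels along the tree encode the threshold ordering, and I must confirm that the added (nonprobe) edges, which depend on shared zero-coordinates, do not break the row-equality, and that the threshold adjacency across the cut is itself a function of the label alone. I expect this to hinge on the fact that the right subtree of each internal node is a single leaf, so that the cut at $e$ partitions the vertex set into two sets that are ordered consistently with the threshold elimination ordering; once that is established, the at-most-$2^k$-distinct-rows argument closes the proof.
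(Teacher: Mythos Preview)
Your approach is essentially the same as the paper's: take a width-$1$ rank-decomposition of the threshold embedding $H$, and argue that across any cut the $G$-adjacency matrix has at most $2^k$ distinct rows (one per label-vector), hence rank at most $2^k$. The paper phrases this more tersely---``there are at most $2^k$ different neighborhoods from one leaf-set of $T-e$ to the other''---but the content is identical.

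One clarification on your anticipated obstacle: you do \emph{not} need the $H$-adjacency across the cut to be a function of the label; it is not. What you need, and what the caterpillar shape of $(T,f)$ gives you immediately, is that for $a\in A$ and $b\in B$ the lowest common ancestor of $a$ and $b$ is determined by $b$ alone (it is the internal node whose right-child leaf is $b$). Hence $a\sim_H b$ is independent of $a$, so the $H$-cut-matrix has identical rows (rank $\le 1$). Combined with the fact that $\mathbb{N}_i$ is independent in $G$, this gives $a\sim_G b \iff a\sim_H b \text{ and } L(a)\perp L(b)$, which for fixed $b$ depends on $a$ only through $L(a)$. That is the whole verification; your worry that the threshold part must be ``a function of the label alone'' is stronger than what is required and is in fact false, but the weaker (and true) constancy in $a$ suffices.
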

\begin{proof}
Consider a rank-decomposition $(T,\tau)$
with width 1 for an embedding $H$ of $G$.
Consider an edge $e$ in $T$ and assume that $M_e$ is
an all-1s-matrix. Each independent set $\mathbb{N}_i$
creates a 0-submatrix in $M_e$. If $k=1$ this proves that
the rankwidth of $G$ is at most 2. In general, for $k \geq 0$,
note that
there are at most $2^k$ different neighborhoods from
one leaf-set of $T-e$ to the other. It follows
that the rank of $M_e$
is at most $2^k$. By the way, it is easy to see that this matrix
has indeed rank $2^k$ in the worst case.
\qed\end{proof}

\begin{theorem}
\label{unfixed}
For each $k \geq 0$ there exists an $O(n^3)$ algorithm
which checks whether a graph $G$ with $n$
vertices is a $k$-probe threshold graph. Thus
$\TH$-width $\in \FPT$.
\end{theorem}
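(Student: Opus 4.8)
The plan is to combine the monadic second-order characterization of Theorem~\ref{char} with the fact, just established, that $k$-probe threshold graphs have rankwidth at most $2^k$, and then to invoke Courcelle-type model-checking on graphs of bounded rankwidth~\cite{kn:courcelle3}. The architecture is a two-phase algorithm: first bound the rankwidth (and bail out with a ``no'' if it is too large), then run the logical model-checker on the resulting bounded-width decomposition.

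First I would write down an explicit monadic second-order formula $\varphi_k$, over the language of graphs with a single binary adjacency relation, that expresses the property in Theorem~\ref{char}. The outermost quantifiers are the existential set quantifiers $\exists\,\mathbb{N}_1,\dots,\mathbb{N}_k$, followed by a conjunction asserting that each $\mathbb{N}_i$ is independent, which is first-order once the sets are fixed. The body is a universal set quantifier $\forall W$ whose matrix states that $G[W]$ has an isolated vertex, or that some $\omega\in W$ is, within $W$, adjacent-or-co-contained to every other vertex; the clause ``there exists $i$ with $\{\omega,y\}\subseteq\mathbb{N}_i$'' is a finite disjunction over $i=1,\dots,k$ and hence harmless. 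Since $\varphi_k$ quantifies only over vertices and vertex sets, it is an MSO$_1$ formula, which is exactly the fragment for which rankwidth (equivalently cliquewidth) is the relevant width parameter. Crucially, $\varphi_k$ depends only on $k$ and not on the input graph.

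Next I would assemble the recognition algorithm. Given $G$, run the fixed-parameter rankwidth routine of Hlin\v{e}n\'y and Oum~\cite{kn:hlineny2}, which for fixed width either returns a rank-decomposition of $G$ of width at most $2^k$ or certifies that the rankwidth exceeds $2^k$, in $O(n^3)$ time. In the latter case the contrapositive of the theorem that $k$-probe threshold graphs have rankwidth at most $2^k$ lets us answer ``no'' at once. In the former case we feed the bounded-width decomposition together with the fixed formula $\varphi_k$ into the MSO model-checking procedure for bounded-rankwidth graphs~\cite{kn:courcelle3}, which for fixed $k$ runs in time linear in $n$; its verdict is precisely whether $G$ is a $k$-probe threshold graph. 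The cubic phase dominates, so the total running time is $O(n^3)$, and since $k$ is the parameter this yields $\TH$-width $\in\FPT$.

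The step I expect to be the main obstacle, conceptually, is the interplay between the two phases: MSO model-checking is efficient only on graphs of bounded rankwidth, yet the input $G$ is arbitrary and a graph that is \emph{not} $k$-probe threshold may have arbitrarily large rankwidth. The observation that makes everything cohere is that we never need to run the model-checker on a high-rankwidth graph, because the bound $2^k$ is a \emph{necessary} condition for membership, so exceeding it already forces a negative answer without any logic. What then remains is the routine verification that $\varphi_k$ faithfully encodes Theorem~\ref{char}, and in particular that the existential guessing of the independent sets $\mathbb{N}_i$ correctly captures the unpartitioned search for a witness.
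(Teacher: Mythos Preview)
Your proposal is correct and follows essentially the same approach as the paper: bound the rankwidth by $2^k$, express the characterization of Theorem~\ref{char} in monadic second-order logic, and invoke the Courcelle--Oum model-checking machinery for bounded rankwidth~\cite{kn:courcelle3,kn:hlineny,kn:oum4}. You are simply more explicit than the paper about the two-phase architecture (first test rankwidth and reject if it exceeds $2^k$, then model-check), which is indeed the right way to handle arbitrary inputs.
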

\begin{proof}
$k$-Probe threshold graphs have bounded rankwidth.
$C_2MS$-Problems can be solved in $O(n^3)$
time for graphs of bounded rankwidth%
~\cite{kn:courcelle3,kn:hlineny,kn:oum4}. By Theorem~\ref{char},
the recognition of $k$-probe threshold graphs is such a problem.

Alternatively, the theorem is also proved by using the finite
collection of forbidden induced subgraphs. Note however that this
proof is non-constructive; Kruskal's theorem does not provide
the forbidden induced subgraphs.
\qed\end{proof}

{\em A fortiori\/}, Theorem~\ref{unfixed} holds
as well when the collection of independent sets
$\mathbb{N}_1,\dots,\mathbb{N}_k$ is a part of the
input. Thus for each $k$ there is an $O(n^3)$ algorithm
that checks whether a graph $G$, given with $k$ independent sets
$\mathbb{N}_i$, can be embedded into a threshold graph.

There are a few drawbacks to this solution. First of all,
Theorem~\ref{unfixed} only shows the {\em existence\/}
of an $O(n^3)$ recognition algorithm;
{\em a priori\/}, it is unclear how
to obtain an algorithm explicitly.
Furthermore, the constants involved in the algorithm
make the solution impractical; already there is an
exponential blow-up when one moves from threshold-width to
rankwidth.

In the next section we show that there exists an explicit,
linear-time
algorithm for the recognition of partitioned
$k$-probe threshold graphs.

\section{Recognition of partitioned $k$-probe threshold graphs}
\label{section fixed}

In this section, let $(G,\mathcal{N})$ be a partitioned $k$-probe
threshold graph, consisting of a graph $G$ and a $k$-witness
$\mathcal{N}$.

\begin{lemma}
\label{isolated}
If $G$ has an isolated vertex $x$ then $G$ is partitioned
$k$-probe
threshold if and only if $G-x$ is partitioned $k$-probe threshold
with the induced collection of independent sets. The same statement
holds as well for the unpartitioned case.
\end{lemma}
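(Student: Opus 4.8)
The plan is to prove both implications of the biconditional, handling the partitioned and unpartitioned cases by essentially the same argument, since the isolated vertex $x$ plays no essential role in any witness. I would work directly from the embedding definition (Definition~\ref{def G-width}), and the single structural fact I intend to invoke is that the class $\TH$ of threshold graphs is closed both under taking induced subgraphs and under adjoining an isolated vertex.

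For the forward direction, suppose $G$ is partitioned $k$-probe threshold, witnessed by independent sets $\mathbb{N}_1,\dots,\mathbb{N}_k$ together with an embedding $H$ of $G$ into a threshold graph. First I would restrict to $V-x$: since threshold graphs are hereditary, $H-x$ is again a threshold graph, and it is an embedding of $G-x$. Each set $\mathbb{N}_i-x$ is independent in $G-x$, so this is exactly the induced collection. It then remains only to check the non-edge condition: every edge of $H-x$ that is not an edge of $G-x$ is an edge of $H$ that is not an edge of $G$ and does not touch $x$, hence its two endpoints lie in a common $\mathbb{N}_i$, and therefore in $\mathbb{N}_i-x$. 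Thus $G-x$ is partitioned $k$-probe threshold with the induced witness.

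For the backward direction, suppose $G-x$ is partitioned $k$-probe threshold with the induced collection, witnessed by an embedding $H'$ of $G-x$ into a threshold graph. I would form $H$ from $H'$ by re-adjoining $x$ as an isolated vertex. The one step that requires care is verifying that $H$ is still a threshold graph; here I would appeal to Theorem~\ref{char threshold}, for which it suffices to exhibit in every induced subgraph of $H$ an isolated or a universal vertex. Any induced subgraph avoiding $x$ is an induced subgraph of $H'$ and inherits the property, while any induced subgraph containing $x$ has $x$ itself as an isolated vertex. Hence $H$ is threshold. Because $x$ is isolated in both $G$ and $H$, the graph $H$ embeds $G$, and the non-edges of $H$ outside $G$ are precisely those of $H'$ outside $G-x$, each already covered by some $\mathbb{N}_i$; the sets $\mathbb{N}_1,\dots,\mathbb{N}_k$ remain independent in $G$, so $G$ is partitioned $k$-probe threshold.

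The same two arguments apply verbatim to the unpartitioned case, where one additionally quantifies existentially over the witness; passing a witness between $G$ and $G-x$ costs nothing precisely because $x$ is isolated, so no set need contain $x$. I expect no genuine obstacle: the whole content is the observation that an isolated vertex of $G$ may be realized as an isolated vertex of the threshold embedding, so that it neither constrains nor is constrained by the rest of the construction, and the only point demanding attention is the threshold-preservation check in the backward direction, which Theorem~\ref{char threshold} dispatches cleanly. As an alternative route, the entire lemma follows in one stroke from the subset characterization of Theorem~\ref{char}, by splitting an arbitrary $W\subseteq V$ according to whether it contains $x$: if $x\in W$ then $x$ is isolated in $G[W]$, and otherwise $W\subseteq V-x$ reduces to the condition for $G-x$.
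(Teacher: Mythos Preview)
Your proposal is correct and follows essentially the same approach as the paper: restrict the embedding to $V-x$ for the forward direction, and adjoin $x$ as an isolated vertex to the embedding of $G-x$ for the backward direction. Your write-up is considerably more detailed than the paper's (which leaves the threshold-closure checks and the witness bookkeeping implicit), and your alternative via Theorem~\ref{char} is a nice bonus, but the core argument is the same.
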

\begin{proof}
Assume $G$ is $k$-probe threshold. Consider an embedding $H$ of $G$.
Then $H-x$ is an embedding of $G-x$. Thus $G-x$ is $k$-probe
threshold.

\noindent
Assume $G-x$ is $k$-probe threshold. Let $H^{\prime}$ be an embedding
of $G-x$. Then we obtain an embedding of $G$ by adding $x$ as an isolated
vertex to $H^{\prime}$.
\qed\end{proof}

\begin{theorem}
\label{fixed}
For every $k$ there exists a linear-time algorithm to check
whether a pair $(G,\mathcal{N})$, where $G$ is a graph and
$\mathcal{N}$ a collection of $k$ independent sets in $G$,
is a partitioned $k$-probe threshold graph.
\end{theorem}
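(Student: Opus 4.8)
The plan is to reduce the problem to a greedy elimination and then to implement that elimination in linear time. Call a vertex $\omega$ \emph{probe-universal} if every non-neighbour $y$ of $\omega$ satisfies $\{\omega,y\}\subseteq\mathbb{N}_i$ for some $i$. I would first establish the reduction rule for probe-universal vertices, mirroring Lemma~\ref{isolated}: if $G$ has a probe-universal vertex $\omega$, then $(G,\mathcal{N})$ is partitioned $k$-probe threshold if and only if $(G-\omega,\mathcal{N}')$ is, where $\mathcal{N}'$ is the induced witness. The forward direction is heredity, which Theorem~\ref{char} hands us directly; for the backward direction, given a threshold embedding $H'$ of $G-\omega$, re-add $\omega$ as a universal vertex. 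Each new edge $(\omega,y)$ then joins two non-adjacent vertices that share some $\mathbb{N}_i$, so $\mathcal{N}$ remains a witness, and adding a universal vertex keeps the graph threshold by Theorem~\ref{char threshold}. Combining this with Lemma~\ref{isolated} and with the observation that, by Theorem~\ref{char} applied to $W=V$, every partitioned $k$-probe threshold graph has an isolated or a probe-universal vertex, shows that greedily deleting isolated and probe-universal vertices is correct: a deletable vertex exists precisely when the current graph is still partitioned $k$-probe threshold, and deleting any one of them preserves the answer.

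For the implementation I would exploit that each vertex $v$ has a \emph{label} $\ell(v)=\{\,i:v\in\mathbb{N}_i\,\}\subseteq\{1,\dots,k\}$, so there are at most $2^k$ distinct labels, and that two vertices may be joined exactly when their labels intersect. For the current graph maintain $\deg(v)$ and the deficiency $\mathrm{inc}(v)$, the number of non-neighbours of $v$ whose label is disjoint from $\ell(v)$; thus $v$ is isolated iff $\deg(v)=0$ and probe-universal iff $\mathrm{inc}(v)=0$. The key is the splitting $\mathrm{inc}(v)=S(\ell(v))-b(v)-c_v$, where $S(\ell)=\sum_{c\cap\ell=\emptyset}T_c$ is one of at most $2^k$ global counters ($T_c$ being the number of present vertices of label $c$), $b(v)$ counts the neighbours of $v$ incompatible with $v$, and $c_v\in\{0,1\}$ is a fixed self-correction ($c_v=1$ exactly for probes).

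The decisive structural observation is that deleting a probe-universal vertex changes no deficiency: all its incompatible vertices are neighbours, so every decrement of an $S(\ell)$ is matched by a decrement of the corresponding $b(\cdot)$. Hence new probe-universal vertices are \emph{unlocked only by deletions of isolated vertices}, through the drop of the global $S$-values. I would therefore keep, for each label $\ell$, its vertices in buckets indexed by $b(v)+c_v$, with a threshold $s_\ell=S(\ell)$; the invariant $\mathrm{inc}(v)=s_{\ell(v)}-\bigl(b(v)+c_v\bigr)\ge 0$ means the probe-universal vertices of label $\ell$ are exactly those in the top bucket $s_\ell$. Deleting a vertex $x$ of label $c_0$ then costs only: (i) decrementing $s_\ell$ for the at most $2^k$ labels $\ell$ disjoint from $c_0$; (ii) for each neighbour $y$ of $x$, decrementing $\deg(y)$ and, when $y$ is incompatible with $x$, moving $y$ one bucket down; and (iii) scanning the new top buckets of the affected labels to harvest the newly probe-universal vertices. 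Performing (i)–(ii) before (iii) preserves the invariant, and since $\mathrm{inc}(v)$ and $\deg(v)$ only decrease, each vertex enters the ready set at most once and, once probe-universal, stays so until removed.

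Summing the costs, step~(i) is $O(2^k)$ per deletion, the neighbour scans and bucket moves of (ii) total $O(m)$ over all deletions, and the top-bucket scans of (iii) are charged to the extracted vertices for $O(n)$ overall; with bucket arrays of size $O(2^k n)$ the whole elimination runs in $O(2^k n+m)$ time, linear for fixed $k$. The correctness of the greedy is the easy part, essentially read off from Theorem~\ref{char}. The main obstacle is precisely step~(iii): deleting an isolated vertex can lower the deficiency of many vertices at once — all those carrying a disjoint label — so a naive recomputation is too slow. Confining these bulk updates to $O(2^k)$ threshold decrements per deletion, and proving that the invariant $\mathrm{inc}(v)=s_{\ell(v)}-\bigl(b(v)+c_v\bigr)$ is maintained by every deletion while each vertex is touched a bounded number of times, is the crux of the argument.
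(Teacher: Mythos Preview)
Your proof is correct and follows the same strategy as the paper: both establish that a partitioned $k$-probe threshold graph always has an isolated or a probe-universal vertex, that deleting either preserves the answer, and hence that greedy elimination decides the problem. The paper's own proof simply asserts that ``since $k$ is a constant, an elimination ordering by isolated and probe universal vertices can be obtained in linear time,'' whereas you supply a concrete $O(2^k n+m)$ implementation via label buckets and the deficiency decomposition $\mathrm{inc}(v)=S(\ell(v))-b(v)-c_v$; this is additional detail rather than a different argument.
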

\begin{proof}
Assume that $(G,\mathcal{N})$ is a partitioned
graph
and let $H$ be an embedding
of $G$. If $H$ has an isolated vertex $x$, then $x$ is also
isolated in $G$ since $H$ is an embedding of $G$.
By Lemma~\ref{isolated} any isolated
vertex may be safely removed from $G$.

\noindent Now we may assume that any embedding $H$ is connected. By
Theorem~\ref{char threshold} $H$ has a universal vertex $\omega$. We
call $\omega$ a `probe universal vertex' of $(G,\mathcal{N})$ if for
every nonneighbor $z$ there is an independent set in $\mathcal{N}$
which contains both $\omega$ and $z$. Thus any partitioned $k$-probe
threshold graph has an isolated vertex or a probe universal vertex.
Finally, observe the following: if $\omega$ is a probe universal
vertex then $G$ is $k$-probe threshold if and only if $G-\omega$ is
$k$-probe threshold, since we may add $\omega$ as a universal vertex
to any embedding of $G-\omega$ and obtain an embedding of $G$. Since
$k$ is a constant, an elimination ordering by isolated - and probe
universal vertices can be obtained in linear time. \qed\end{proof}

\begin{remark}
Note that the algorithm described in Theorem~\ref{fixed}
is fully polynomial. This proves that the `sandwich problem,'
studied by Golumbic {\em et al.\/}, in~\cite{kn:golumbic},
is polynomial for threshold graphs.
\end{remark}

\section{$\TH$-width is NP-complete}

Let $\mathfrak{T}$ be the class of complete
graphs (cliques). We proved in~\cite{kn:chang} that
$\mathfrak{T}$-width is
NP-complete. For completeness sake we include the proof.

\begin{theorem}
\label{T-width}
$\mathfrak{T}$-Width is NP-complete.
\end{theorem}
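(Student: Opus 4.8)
The plan is to recognize that for the class $\mathfrak{T}$ of cliques the width parameter collapses to a classical covering number of the complement. Observe first that, by Definition~\ref{def G-width}, any embedding $H\in\mathfrak{T}$ of $G$ on the vertex set $V$ must be the complete graph on $V$. Therefore every \emph{non}-edge $(x,y)$ of $G$ is an edge of $H$ that is not an edge of $G$, and so must satisfy $\{x,y\}\subseteq\mathbb{N}_i$ for some $i$. Conversely, any family of independent sets whose members jointly contain both endpoints of every non-edge yields a valid embedding into $K_{|V|}$. Hence the $\mathfrak{T}$-width of $G$ is exactly the least number of independent sets of $G$ that together cover all non-edges of $G$.

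The key step is then to pass to the complement $\overline{G}$. An independent set of $G$ is a clique of $\overline{G}$, and a non-edge of $G$ is an edge of $\overline{G}$; since the sets $\mathbb{N}_i$ are allowed to overlap, the covering requirement translates verbatim into: cover every edge of $\overline{G}$ by some clique of $\overline{G}$. Thus $\mathfrak{T}$-width$(G)$ equals the \emph{edge clique cover number} of $\overline{G}$, namely the minimum number of cliques such that each edge lies in at least one of them. Membership of the decision problem in NP is then immediate: a witness is a list of $k$ vertex subsets, and one verifies in polynomial time both that each is independent in $G$ and that every non-edge of $G$ has both endpoints inside some subset.

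For NP-hardness I would reduce from the edge clique cover problem (equivalently, covering the edges of a graph by cliques), whose decision version is known to be NP-complete (Kou, Stockmeyer and Wong; Orlin). Given an instance asking whether the edges of a graph $F$ can be covered by $k$ cliques, set $G=\overline{F}$, which is computable in polynomial time and satisfies $\overline{G}=F$. By the identification above, $\mathfrak{T}$-width$(G)\le k$ if and only if $F$ admits an edge clique cover of size at most $k$, so the reduction is correct and polynomial.

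The main obstacle here is conceptual rather than computational: the reduction itself is nothing more than the complementation map, so the substance of the argument lies entirely in establishing the equivalence with edge clique cover cleanly. In particular one must check that allowing the independent sets $\mathbb{N}_i$ to share vertices exactly mirrors the freedom to reuse vertices across cliques in an edge cover, and that covering all non-edges (rather than, say, partitioning them) is precisely what the definition of $\mathfrak{T}$-width demands. Once this correspondence is made precise, NP-completeness of $\mathfrak{T}$-width follows at once from the hardness of edge clique cover.
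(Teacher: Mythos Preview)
Your proposal is correct and follows essentially the same approach as the paper: both identify $\mathfrak{T}$-width$(G)$ with the edge clique cover number of $\overline{G}$ and then invoke the NP-completeness of edge clique cover (Kou, Stockmeyer and Wong). Your write-up is in fact more explicit than the paper's, spelling out membership in NP and the complementation reduction in detail, but the underlying idea is identical.
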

\begin{proof}
Let $(G,\mathcal{N})$
be a partitioned $k$-probe complete graph,
with a witness
\[\mathcal{N}=\{\mathbb{N}_i\;|\; i=1,\dots,k\}\]
which is a collection of $k$ independent sets in $G$.
Thus every non-edge of $G$ has both its endvertices
in one of the independent sets $\mathbb{N}_i$.
Then $\mathcal{N}$ forms a
clique-cover of the edges of $\Bar{G}$.
This shows that a graph $G$ has $\mathfrak{T}$-width
at most $k$ if and only if the edges of $\Bar{G}$
can be covered with $k$ cliques. The problem to
cover the edges of a graph by a minimum number of
cliques is NP-complete~\cite{kn:kou}. This proves the theorem.
\qed\end{proof}

\begin{theorem}
\label{TH-width}
$\TH$-width is NP-complete.
\end{theorem}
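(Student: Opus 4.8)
The plan is to prove the two directions of NP-completeness separately. For membership in NP, I would take as certificate the $k$ independent sets $\mathbb{N}_1,\dots,\mathbb{N}_k$ together with the set of \emph{extra} edges, i.e.\ the edges of an embedding $H$ that are not edges of $G$; since we may always assume that $H$ has the same vertex set as $G$, this certificate has size $O(n^2)$. Verification then consists of checking that each $\mathbb{N}_i$ is independent in $G$, that every extra edge has both endvertices in a common $\mathbb{N}_i$, and that the resulting graph $H$ is a threshold graph. The last test is polynomial, for instance by repeatedly stripping isolated and universal vertices as justified by Theorem~\ref{char threshold}, or by directly searching for an induced $P_4$, $C_4$, or $2K_2$. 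All checks run in polynomial time, so the problem is in NP.

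For NP-hardness I would reduce from $\mathfrak{T}$-width, which is NP-complete by Theorem~\ref{T-width}. Recall from that proof that $G$ has $\mathfrak{T}$-width at most $k$ exactly when the edges of $\Bar{G}$ can be covered by $k$ cliques, equivalently when the non-edges of $G$ can be partitioned among $k$ independent sets of $G$. The guiding idea is that a complete graph is itself a threshold graph, so the only advantage that $\TH$-width enjoys over $\mathfrak{T}$-width is the freedom to leave some non-edges of $G$ uncovered by keeping them as non-edges of the threshold embedding $H$. I would therefore, from an instance $(G,k)$ of $\mathfrak{T}$-width, build an instance $(G',k')$ of $\TH$-width by attaching a gadget whose purpose is to destroy exactly this freedom: every threshold embedding $H'$ of $G'$ that uses at most $k'$ witness sets should be forced to be complete on the original vertex set $V$. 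Once completeness on $V$ is forced, each remaining non-edge of $G$ becomes an extra edge of $H'$ and so must be absorbed by some $\mathbb{N}_i$; by the characterization recalled above this is precisely a clique cover of $\Bar{G}$, and the two instances become equivalent for a suitable $k'=f(k)$.

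The technical heart of the construction is the forcing gadget. For a pair $u,v$ that is a non-edge of $G$ I would introduce gadget vertices that, together with $u$ and $v$, induce a configuration forbidden in threshold graphs---most naturally a $C_4$ through $u$ and $v$---so that any threshold supergraph $H'$ must repair it. Because such a $C_4$ can in principle be repaired either by inserting the edge $uv$ (the outcome we want) or by inserting a gadget edge, the gadget must be designed so that the gadget edges are unavailable: the gadget vertices should be placed so that they cannot be co-located in a single witness set, whence inserting a gadget edge would violate the covering condition of Definition~\ref{def G-width}. In the forward direction I would then verify that a $k$-clique cover of $\Bar{G}$, extended by a constant number of additional independent sets to accommodate the gadget, yields a valid threshold embedding of $G'$; in the backward direction I would argue that any witness of size at most $k'$ for $G'$ must fill in every missing edge of $G$ and, after discarding the gadget coordinates, restricts to a $k$-clique cover of $\Bar{G}$. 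Handling isolated vertices via Lemma~\ref{isolated} keeps the elimination argument clean.

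I expect the main obstacle to be the forcing step itself, for two intertwined reasons. First, threshold graphs are extremely flexible from below: universal and isolated vertices can always be added \emph{for free}, so any gadget that merely dominates or is dominated by $V$ fails---the diamond and the claw are both threshold graphs, so a naive universal gadget never creates a genuine obstruction. Second, the $k'$ witness sets form a single shared budget across all gadgets, so a per-non-edge gadget must be analysed globally rather than in isolation; the delicate point is to prove that the only threshold completion of $G'$ respecting the budget is the one that completes $V$, while simultaneously controlling the gadget's own contribution so that $k'$ exceeds $k$ by only a bounded amount and the reduction stays polynomial and faithful. Making these two constraints compatible---an unavoidable obstruction that nonetheless costs almost nothing in the witness budget---is where the real work lies.
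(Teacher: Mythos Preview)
Your framework is exactly the paper's: membership in NP is routine, and hardness goes via $\mathfrak{T}$-width (Theorem~\ref{T-width}) by building $G'$ so that every cheap threshold embedding is forced to make $V(G)$ a clique. The gap is the gadget. You propose a per--non-edge $C_4$ gadget and then, correctly, identify the two obstacles that make this hard: threshold graphs absorb universal/isolated additions for free, and per--non-edge gadgets interact globally through a shared witness budget. You do not actually resolve either obstacle, and in fact a pair of fresh vertices $a_{uv},b_{uv}$ attached to $u$ and $v$ are false twins, so nothing stops a single witness set from swallowing \emph{all} the gadget diagonals at once; blocking that cheaply while keeping the rest of $G'$ threshold-embeddable is precisely the ``real work'' you defer.

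The paper avoids the per--non-edge route entirely with one global gadget. Add a clique $C$ on $n^2$ vertices, completely joined to $V(G)$, and one further vertex $\omega$ joined to $V(G)$ but to nothing in $C$. Now for \emph{every} non-edge $xy$ of $G$ and \emph{every} $c\in C$, the set $\{x,\omega,y,c\}$ induces a $C_4$ in $G'$; any threshold embedding must add $xy$ or $\omega c$. If some $xy$ stays missing, then all $n^2$ edges $\omega c$ must be added, and since $C$ is a clique each witness set meets $C$ in at most one vertex, forcing at least $n^2$ witness sets. Hence any embedding with fewer than $n^2$ sets makes $V(G)$ a clique, and then $G'$ is already threshold (its complement is a star on $\{\omega\}\cup C$ together with $n$ isolated vertices), so no gadget edges are added and the witness cost is exactly the $\mathfrak{T}$-width of $G$. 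This single gadget simultaneously solves both of your worries: the obstruction is unavoidable because it is replicated $n^2$ times through a clique (making the ``wrong'' repair maximally expensive), and it contributes zero to the witness budget in the intended embedding. Replacing your per--non-edge plan with this construction turns the sketch into a complete proof.
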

\begin{proof}
Assume there is a polynomial-time algorithm to compute
$\TH$-width. We show that we can use that algorithm to
compute $\mathfrak{T}$-width.
Let $G$ be a graph for which we wish to
compute $\mathfrak{T}$-width.
Construct a graph $G^{\prime}$ by adding a clique $C$
with $n^2$ vertices. Make all vertices of $C$ adjacent to
all vertices of $G$. Add one more vertex $\omega$ and make
$\omega$ also adjacent to all vertices of $G$.
Consider two nonadjacent vertices $x$ and $y$
of $G$. In any embedding of $G^{\prime}$ into a threshold graph,
either $x$ and $y$ are adjacent or $\omega$ is adjacent to all
vertices of $C$. However, to make $\omega$ adjacent to all
vertices of $C$, we need at least $n^2$ independent sets.
Obviously, making a clique of $G$ embeds $G^{\prime}$ into
a threshold graph, namely the complement of a star and a collection
of isolated vertices. This embedding needs less than
$n^2$ independent sets.
This proves the theorem.
\qed\end{proof}

\section{A fixed-parameter algorithm to compute $\TH$-width}

Assume that $(G,\mathcal{N})$ is a connected
partitioned $k$-probe threshold graph
with witness
\[\mathcal{N}=\{\mathbb{N}_i \;|\; i=1\dots,k\}\]
and let $H$ be an embedding. The {\em label\/} $L(x)$ of a
vertex $x$ is the $0/1$-vector of length $k$ with the $i^{\mathrm{th}}$
entry $L_i(x)$ equal to 1 if and only if $x \in \mathbb{N}_i$.
We write $L(x) \leq L(y)$ if $L_i(x) \leq L_i(y)$ for all $i=1,\dots,k$.
We write $L(x) \perp L(y)$ if there is no $i$ with $L_i(x)=L_i(y)=1$.

\begin{definition}
A witness $\mathcal{N}$ is {\em well-linked\/} if
for every $i=1,\dots,k$, every vertex
$x \not\in \mathbb{N}_i$ has a neighbor in $\mathbb{N}_i$.
\end{definition}

\begin{lemma}
Every $k$-probe threshold graph has a witness with $k$
independent sets which is well-linked.
\end{lemma}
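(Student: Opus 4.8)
The plan is to take any witness for $G$ and make it well-linked by repeatedly absorbing offending vertices into the independent sets that they fail to link to, arguing that each such absorption preserves both independence and the embedding while a simple size bound forces termination.

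First I would fix an arbitrary witness $\mathcal{N} = \{\mathbb{N}_1, \dots, \mathbb{N}_k\}$ together with a threshold embedding $H$ of $G$; these exist because $G$ is $k$-probe threshold. If $\mathcal{N}$ is not already well-linked, there is an index $i$ and a vertex $x \notin \mathbb{N}_i$ with no neighbor in $\mathbb{N}_i$. The key observation is that this failure is exactly what permits a repair: since $x$ has no neighbor in $\mathbb{N}_i$, the set $\mathbb{N}_i + x$ is again independent, so I may replace $\mathbb{N}_i$ by $\mathbb{N}_i + x$ and retain a collection of $k$ independent sets.

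Next I would verify the two invariants that make this replacement safe. For the embedding: the only constraint on a witness, beyond independence, is that every edge of $H$ that is not an edge of $G$ has both endvertices in a common $\mathbb{N}_j$, and enlarging $\mathbb{N}_i$ can only enlarge the set of pairs covered in this way, so the same $H$ remains an embedding of $G$ for the new collection. For progress toward well-linkedness: enlarging $\mathbb{N}_i$ changes nothing for indices $j \neq i$, and for index $i$ any vertex still lying outside $\mathbb{N}_i + x$ that previously had a neighbor in $\mathbb{N}_i$ still has one, so no new violation is introduced, while the pair $(i,x)$ is eliminated.

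Finally I would close the argument by termination: each replacement strictly increases $\sum_{j=1}^k |\mathbb{N}_j|$, a quantity bounded above by $kn$, so after finitely many steps no violating pair remains and the resulting witness is well-linked. The hard part is really just pinning down the monotonicity, namely that absorbing a vertex neither breaks the embedding nor creates a fresh failure of the well-linked property; once these invariants are in place the saturation terminates for trivial size reasons, so I expect no genuine obstacle beyond stating these checks cleanly.
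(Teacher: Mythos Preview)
Your proposal is correct and follows exactly the same approach as the paper: start from any witness and repeatedly absorb a vertex $x$ into an $\mathbb{N}_i$ in which it has no neighbor. The paper states this in a single sentence, whereas you spell out the invariants (independence preserved, embedding preserved, termination via the bounded counter $\sum_j |\mathbb{N}_j|$), but the underlying argument is identical.
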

\begin{proof}
Starting with any witness, repeatedly add a vertex $x$ to an
independent set $\mathbb{N}_i$ if it has no neighbor in that set.
\qed\end{proof}

Consider the equivalence relation $\equiv$ defined by
$x \equiv y$ if $N(x)=N(y)$. Denote the equivalence class
of a vertex $x$ by $(x)$.
Define the partial order $\preceq$ by:
\[(x) \preceq (y) \quad\mbox{if}\quad N(x) \subseteq N(y).\]

Likewise, we consider the equivalence relation $\equiv^{\prime}$ defined
by $x \equiv^{\prime} y$ if $N[x]=N[y]$. The equivalence class of a vertex
$x$ under this relation is denoted by $[x]$.
We consider the
partial order defined by:
\[ [x] \preceq [y] \quad\mbox{if}\quad N[x] \subseteq N[y].\]

\begin{lemma}
Assume $(G,\mathcal{N})$ is a $k$-probe clique with a well-linked
witness $\mathcal{N}$. Then
\[(x) \preceq (y) \quad\Leftrightarrow\quad
L(x) \geq L(y) \neq \mathbf{0}.\]
\end{lemma}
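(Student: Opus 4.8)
The plan is to reduce the statement to a comparison of the label supports via the one structural fact a $k$-probe clique provides. First I would record the adjacency dictionary: every non-edge of $G$ has both endpoints in some $\mathbb{N}_i$, and each $\mathbb{N}_i$ is independent, so two distinct vertices are non-adjacent exactly when they lie in a common $\mathbb{N}_i$ — that is, exactly when their labels share a coordinate equal to $1$. Equivalently, for every vertex $x$,
\[
N(x)=\{\,z\neq x \;:\; L(z)\perp L(x)\,\}.
\]
Under this dictionary the relation $(x)\preceq(y)$, i.e.\ $N(x)\subseteq N(y)$, becomes a statement purely about the supports of $L(x)$ and $L(y)$, and the lemma splits into the two implications below. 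I also use that the preceding reductions allow us to assume $G$ has no universal vertex: a vertex with label $\mathbf{0}$ is adjacent to every other vertex and hence universal, so every vertex now has $L\neq\mathbf{0}$, which is exactly what makes the clause $L(y)\neq\mathbf{0}$ hold.

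For the implication $L(x)\geq L(y)\neq\mathbf{0}\Rightarrow N(x)\subseteq N(y)$ I would argue directly and without well-linkedness. Let $z\in N(x)$, so $z\neq x$ and $L(z)\perp L(x)$. Since $L(y)\leq L(x)$, the support of $L(y)$ is contained in that of $L(x)$, whence $L(z)\perp L(y)$; thus $z$ and $y$ share no $\mathbb{N}_i$ and are adjacent as soon as $z\neq y$. Here the hypothesis $L(y)\neq\mathbf{0}$ is used exactly once: were $z=y$, we would have $L(y)=L(z)\perp L(x)$ while at the same time $\mathbf{0}\neq L(y)\leq L(x)$, so the support of $L(y)$ would be a nonempty subset of that of $L(x)$ yet disjoint from it, a contradiction. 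Hence $z\in N(y)$ and $N(x)\subseteq N(y)$.

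The reverse implication is the crux, and it is where well-linkedness is indispensable. Assume $N(x)\subseteq N(y)$. To obtain $L(x)\geq L(y)$ I would argue coordinatewise by contradiction: suppose $y\in\mathbb{N}_i$ but $x\notin\mathbb{N}_i$ for some $i$. Because the witness is well-linked and $x\notin\mathbb{N}_i$, the vertex $x$ has a neighbor $w\in\mathbb{N}_i$; then $w\in N(x)\subseteq N(y)$, so $w\neq y$ and $w$ is adjacent to $y$. But $w,y\in\mathbb{N}_i$ with $w\neq y$ and $\mathbb{N}_i$ independent forces $w$ and $y$ to be non-adjacent, a contradiction. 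Hence $L(x)\geq L(y)$, and $L(y)\neq\mathbf{0}$ by the standing assumption (or, when $x\neq y$, because $L(y)=\mathbf{0}$ would make $y$ universal and put $y\in N(x)-N(y)$, violating the containment).

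The genuine difficulty is isolated in the third paragraph: one must convert the opaque set inclusion $N(x)\subseteq N(y)$ into coordinatewise domination of the labels, and well-linkedness is precisely the hypothesis that supplies, for each coordinate $i$ with $y\in\mathbb{N}_i$, a neighbor of $x$ inside $\mathbb{N}_i$ whose forced adjacency to $y$ contradicts the independence of $\mathbb{N}_i$. Everything else — the adjacency dictionary, the short converse direction, and the nonzero-label bookkeeping — is routine.
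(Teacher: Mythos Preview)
Your proof is correct and follows the same approach as the paper: the direction $(x)\preceq(y)\Rightarrow L(x)\geq L(y)$ uses well-linkedness to produce a neighbor of $x$ in $\mathbb{N}_i$ that would contradict the independence of $\mathbb{N}_i$, and the converse uses the adjacency characterization $z\in N(x)\Leftrightarrow L(z)\perp L(x)$ in a $k$-probe clique. You are in fact more careful than the paper on two points: you explicitly rule out $z=y$ (the paper handles this implicitly by first noting $x$ and $y$ are nonadjacent), and you justify the clause $L(y)\neq\mathbf{0}$, which the paper's proof omits; your direct argument for $x\neq y$ is the right one, while the ``standing assumption'' of no universal vertex is not actually present in the paper at this point.
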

\begin{proof}
Assume $(x) \preceq (y)$. Thus $N(x) \subseteq N(y)$.
Assume that
$y \in \mathbb{N}_i$ and $x \not\in \mathbb{N}_i$ for some $i$. Since
$\mathcal{N}$ is well-linked there exists a vertex
$z \in N(x) \cap \mathbb{N}_i$. Then $z \in N(y)$ since
$N(x) \subseteq N(y)$.
But this contradicts $\{y,z\} \subseteq \mathbb{N}_i$.

\noindent
Assume $L(x) \geq L(y) \neq \mathbf{0}$. Then $x$ and $y$
are not adjacent. Let $z \in N(x)$. Then $L(z) \perp L(x)$.
Since $L(x) \geq L(y)$ also $L(z) \perp L(y)$. Thus $z \in N(y)$
since $(G,\mathcal{N})$ is a $k$-probe clique.
\qed\end{proof}

Note that Definition~\ref{def threshold} is equivalent to the
following characterization.

\begin{theorem}[\cite{kn:chvatal2,kn:mahadev}]
\label{chain char}
A graph $H$ is a threshold graph if and only if for every
pair of vertices $x$ and $y$, $N(x) \subseteq N[y]$ or
$N(y) \subseteq N[x]$.
\end{theorem}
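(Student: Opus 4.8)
The plan is to prove both implications by contraposition, working directly with the forbidden-subgraph form of Definition~\ref{def threshold}: $H$ is a threshold graph if and only if it contains no induced $P_4$, $C_4$, or $2K_2$.

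First I would show that if the chain condition fails then $H$ is not a threshold graph. Suppose there are vertices $x,y$ with $N(x)\not\subseteq N[y]$ and $N(y)\not\subseteq N[x]$, and choose witnesses $a\in N(x)-N[y]$ and $b\in N(y)-N[x]$. The forced relations are that $x$ is adjacent to $a$ and $y$ to $b$, while $x$ is nonadjacent to $b$ and $y$ to $a$. A short check shows $x,y,a,b$ are four distinct vertices; for instance $a=b$ would force $a$ to be both adjacent and nonadjacent to $x$. The only pairs whose status is not yet determined are $xy$ and $ab$, so I would split into four cases according to which of these are edges. When both are nonedges, $\{x,a,y,b\}$ induces a $2K_2$; when exactly one is an edge it induces a $P_4$ (namely $a$--$x$--$y$--$b$ or $x$--$a$--$b$--$y$); and when both are edges it induces a $C_4$ on $x$--$a$--$b$--$y$--$x$. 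In every case $H$ contains a forbidden induced subgraph, hence is not a threshold graph.

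For the converse I would show that each forbidden configuration already violates the chain condition, which yields that a threshold graph must satisfy it. In an induced $P_4$ with edges $ab$, $bc$, $cd$ the middle pair $b,c$ works, via $a\in N(b)-N[c]$ and $d\in N(c)-N[b]$. In an induced $C_4$ with edges $ab$, $bc$, $cd$, $da$ the pair $a,b$ violates it, via $d\in N(a)-N[b]$ and $c\in N(b)-N[a]$. In an induced $2K_2$ with edges $ab$ and $cd$ the pair $a,c$ violates it. The point to double-check here is that these witnesses are valid for the neighborhoods \emph{in $H$} and not merely inside the induced copy; this is fine because nonadjacency within an induced subgraph is genuine nonadjacency in $H$.

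The step I expect to be most delicate is making the two directions genuinely symmetric: the first argument extracts a forbidden subgraph from a failure of the condition, while the second must check that no vertex outside the four-vertex pattern can spoil the witness, which is exactly the observation above. As an alternative route for the converse I would note that the chain condition is hereditary---restricting $N(x)\subseteq N[y]$ to $H[W]$ preserves the inclusion since $N_{H[W]}(x)=N(x)\cap W$---and then invoke Theorem~\ref{char threshold}, reducing to showing that any graph satisfying the condition has an isolated or a universal vertex. I would expect the existence of such a vertex to be the real obstacle on that route, since a vertex of maximum degree may have a nonadjacent open twin, and one must use the absence of an isolated vertex to rule this out; for that reason I would present the direct forbidden-subgraph argument as the main proof and mention the Theorem~\ref{char threshold} route only in passing.
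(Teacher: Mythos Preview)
Your argument is correct in both directions; the four-case analysis in the forward implication is clean, and your observation that induced nonadjacency transfers to $H$ is exactly what is needed for the converse.

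The paper, however, does not give its own proof of this theorem at all: it states the result with citations to Chv\'atal--Hammer and Mahadev--Peled and moves on, treating it as a known characterization. So there is no ``paper's proof'' to compare against. Your direct forbidden-subgraph argument is the standard elementary one and is perfectly suitable here; the alternative route you sketch via Theorem~\ref{char threshold} is also viable, though as you note it requires a small extra argument to extract a universal or isolated vertex from the chain condition (take a vertex $y$ maximal in the quasi-order $N(x)\subseteq N[y]$; if $y$ is not universal, any nonneighbour $z$ satisfies $N(z)\subseteq N[y]$ and in particular $N(z)\subseteq N(y)$, and iterating downward along nonneighbours yields an isolated vertex).
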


\noindent
In other words, a graph $G=(V,E)$ is a threshold
graph if and only if there is a {\em total order\/}
of the vertices $[x_1,\dots,x_n]$, {\em i.e.\/}, a {\em chain\/},
such that:
\[ 1 \leq i < j \leq n \quad\Rightarrow\quad N(x_i) \subseteq N[x_j].\]

\begin{theorem}
\label{max nonprobes}
Let $(G,\mathcal{N})$ be a $k$-probe threshold graph with a
well-linked witness $\mathcal{N}$ and let $H$ be an embedding.
For every nonadjacent pair $x$ and
$y$ in $G$ with $N_H(x) \subseteq N_H[y]$:
\[ (x) \preceq (y) \quad\Leftrightarrow\quad L(x) \geq L(y).\]
\end{theorem}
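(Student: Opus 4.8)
The plan is to prove the two implications separately, reusing the well-linkedness argument of the $k$-probe clique lemma for the forward direction and exploiting the local embedding hypothesis $N_H(x) \subseteq N_H[y]$ for the reverse direction.

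For the forward implication, suppose $(x) \preceq (y)$, so that $N(x) \subseteq N(y)$, and assume toward a contradiction that $L(x) \geq L(y)$ fails. Then there is an index $i$ with $y \in \mathbb{N}_i$ but $x \not\in \mathbb{N}_i$. Because $\mathcal{N}$ is well-linked, $x$ has a neighbor $z \in \mathbb{N}_i$. From $N(x) \subseteq N(y)$ we obtain $z \in N(y)$, so $y$ and $z$ are adjacent in $G$; but $y,z \in \mathbb{N}_i$ and $\mathbb{N}_i$ is independent, a contradiction. This is exactly the well-linkedness argument already carried out for $k$-probe cliques, and it transfers verbatim.

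For the reverse implication, suppose $L(x) \geq L(y)$. I would show $N(x) \subseteq N(y)$ directly. Pick any $z \in N(x)$. Since $H$ is an embedding of $G$, the edge $xz$ survives in $H$, so $z \in N_H(x) \subseteq N_H[y]$. The hypothesis that $x$ and $y$ are nonadjacent in $G$ rules out $z=y$ (otherwise $xy$ would be an edge of $G$), hence $z \in N_H(y)$ and the edge $yz$ lies in $H$. Now $z \in N(x)$ forces $L(z) \perp L(x)$, since each $\mathbb{N}_j$ is independent, and $L(x) \geq L(y)$ then yields $L(z) \perp L(y)$. Therefore $y$ and $z$ share no $\mathbb{N}_j$, so $yz$ cannot be one of the fill edges of the embedding and must already be an edge of $G$; that is, $z \in N(y)$.

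The crux is this last step of the reverse implication: converting adjacency in $H$ back into adjacency in $G$. In the clique lemma the completeness of $H$ made this immediate, whereas here the weaker, purely local condition $N_H(x) \subseteq N_H[y]$ must play that role, and this is the only place the embedding is used. The orthogonality $L(z) \perp L(y)$ is precisely what certifies that the $H$-edge $yz$ is not a fill edge. Note, finally, that the side condition $L(y) \neq \mathbf{0}$ needed in the clique lemma drops out here: the case $L(y)=\mathbf{0}$ is absorbed by the same orthogonality argument, since a vertex in no $\mathbb{N}_j$ can never be an endpoint of a fill edge.
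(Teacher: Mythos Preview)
Your proof is correct and matches the paper's argument essentially line for line: the forward direction is the well-linkedness contradiction verbatim, and the reverse direction is the same orthogonality-of-labels argument, only you phrase it directly ($L(z)\perp L(x)$ hence $L(z)\perp L(y)$) whereas the paper phrases the same step contrapositively (if $z\notin N_G(y)$ then $\{z,y\}\subseteq\mathbb{N}_i$, whence $x\in\mathbb{N}_i$). Your closing remark about why the side condition $L(y)\neq\mathbf{0}$ from the clique lemma is no longer needed is a nice clarification that the paper leaves implicit.
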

\begin{proof}
Assume $L(x) \geq L(y)$. Let $z \in N_G(x)$. Then $z \in N_H[y]$.
Since $x$ and $y$ are not adjacent, $z \neq y$. Thus $z \in N_H(y)$.
If $z \not\in N_G(y)$, then there exists an $i$ with
$\{z,y\} \subseteq \mathbb{N}_i$. Now $L(x) \geq L(y)$ implies that
also $x \in \mathbb{N}_i$, which contradicts that $z$ is adjacent to $x$.
Hence $(x) \preceq (y)$.

\noindent
Assume $(x) \preceq (y)$, that is, $N_G(x) \subseteq N_G(y)$.
{\em A fortiori\/}, $x$ and $y$ are not adjacent.
Assume $\neg(L(x) \geq L(y))$. Then there exists an $i$ with
$y \in \mathbb{N}_i$ and $x \not\in \mathbb{N}_i$. Since
$\mathcal{N}$ is well-linked, there exists a vertex
$z \in N_G(x) \cap \mathbb{N}_i$. Since $(x) \preceq (y)$,
$z \in N_G(y)$, contradicting that $z$ and $y$ are both in $\mathbb{N}_i$.
\qed\end{proof}

For completeness sake we note the following.

\begin{lemma}
Assume $[x] \preceq [y]$ and $x \neq y$. Then
\[ \forall_{i \in L(y)} N_G(x) \cap \mathbb{N}_i =\{y\}.\]
\end{lemma}
\begin{proof}
Since $x$ and $y$ are adjacent, we have that $L(x) \perp L(y)$.
Assume that $y \in \mathbb{N}_i$ for some $i \in \{1,\dots,k\}$.
Thus $x \not\in \mathbb{N}_i$. Since $\mathcal{N}$ is well-linked,
there exists a vertex $z \in N(x) \cap \mathbb{N}_i$.
Since $N_G[x] \subseteq N_G[y]$, $z \in N_G[y]$. But then we must have
$z=y$, otherwise $z$ and $y$ are nonadjacent.
\qed\end{proof}

\begin{definition}
A {\em true --\/} or {\em false module\/}
is a set of vertices such that every pair is a true -- or false
twin, respectively.\footnote{A {\em true twin\/} is a pair of vertices
$x$ and $y$ with $N[x]=N[y]$. A {\em false twin\/} is a pair of
vertices $x$ and $y$ with $N(x)=N(y)$.}
A {\em $k$-probe module\/} is either a false module with at least
3 vertices or a true module with at least $k+3$ vertices.
\end{definition}

\begin{lemma}
Let $S$ be $k$-probe module.
Then $G$ has $\TH$-width
at most $k$ if and only if $G-x$ has $\TH$-width at most $k$ for
any $x \in S$.
\end{lemma}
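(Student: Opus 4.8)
The plan is to prove the two implications separately. The forward implication is immediate, since the class of graphs of $\TH$-width at most $k$ is hereditary: restricting any witness and threshold embedding of $G$ to $V-x$ certifies $G-x$. For the converse I start from a threshold embedding $H'$ of $G-x$ with a witness $\mathcal{N}'=\{\mathbb{N}'_1,\dots,\mathbb{N}'_k\}$ and reinsert $x$ by cloning a well-chosen vertex $y\in S-x$, either as a false twin ($N_H(x)=N_{H'}(y)$, $x\not\sim y$) or as a true twin ($N_H[x]=N_{H'}[y]$, $x\sim y$), setting the label $L(x)$ from $L(y)$. A short case check on the forbidden subgraphs shows that a false-twin clone of $y$ can only create a new induced $C_4$, and a true-twin clone only a new induced $2K_2$; hence the false-twin clone keeps $H$ a threshold graph exactly when $N_{H'}(y)$ is a clique, and the true-twin clone exactly when $V(H')-N_{H'}[y]$ is independent. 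Fixing a split partition $V(H')=K\cup I$ of the threshold graph $H'$, the first condition is automatic for $y\in I$ and the second for $y\in K$, so the side of $y$ in the partition dictates which clone to use.

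For the false module, where $x\not\sim_G y$ for every $y\in S-x$, I use $|S|\geq 3$, so $S-x$ has at least two vertices. If some $y\in (S-x)\cap I$, I clone $x$ as a false twin of $y$ with $L(x):=L(y)$; since $N_G(x)=N_G(y)=N_G(S)$, every added edge at $x$ coincides with an added edge at $y$ and is covered by the same set, and the modified family stays independent in $G$ -- these checks are routine. Otherwise $S-x\subseteq K$, so any two of its vertices are adjacent in $H'$ but not in $G$; their edge is therefore an added edge, forcing each vertex of $S-x$ into some $\mathbb{N}'_i$, i.e.\ to have label $\neq\mathbf{0}$. I then clone $x$ as a true twin of one such $y_1$ with $L(x):=L(y_1)$: the extra edge $xy_1$ is coverable precisely because $L(y_1)\neq\mathbf{0}$, and threshold-ness is preserved because $y_1\in K$.

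The true module is the heart of the argument and the step I expect to be the main obstacle. Now $x$ is adjacent in $G$ to all of $S-x$, so a false-twin clone is impossible, and -- more subtly -- one cannot copy a template's label at all: if $y$ is the template then $x\sim_G y$, so $x$ may not enter any $\mathbb{N}'_i$ containing $y$, yet those are exactly the sets needed to cover the added edges at $x$. The resolution uses the bound $|S|\geq k+3$ through a counting step: $S-x$ is a clique of size at least $k+2$, and an independent set meets a clique in at most one vertex, so at most $k$ vertices of $S-x$ have a nonzero label; hence at least two of them, say $y_1,y_2$, are probes. A probe is incident to no added edge, so $N_{H'}(y_1)=(S-x-y_1)\cup N_G(S)$, and cloning $x$ as a true twin of $y_1$ with the empty label creates no added edge whatsoever, so the witness is left untouched.

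It remains to check that this true-twin clone preserves threshold-ness, i.e.\ that $V(H')-N_{H'}[y_1]=V-S-N_G(S)=:R$ is independent in $H'$. Suppose $R$ had an edge $ab$. Since $a,b\notin N_{H'}[y_1]$, the chain characterisation of Theorem~\ref{chain char} applied to $y_1$ and $a$ forces $N_{H'}(y_1)\subseteq N_{H'}[a]$, so $a$ is adjacent in $H'$ to all of $S-x-y_1$, in particular to the second probe $y_2$. But $a\in R$ is nonadjacent to $y_2$ in $G$, so $ay_2$ would be an added edge at the probe $y_2$, which is impossible. Thus $R$ is independent, the clone keeps $H$ a threshold graph, and $G$ has $\TH$-width at most $k$. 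It is precisely the guarantee of a \emph{second} probe that makes this final contradiction work, which is why the true-module threshold is $k+3$ rather than $k+2$.
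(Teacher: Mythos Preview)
Your proof is correct and follows the paper's overall scheme: the forward direction is hereditary, and for the converse you reinsert $x$ by cloning a suitable member of $S$. In the false-module case your argument is essentially the paper's, though you are more explicit than the paper about which kind of twin is used in each sub-case and why it preserves threshold-ness.

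The true-module case is where you take a genuinely different route. The paper first fixes a split partition $(C,I)$ of $H'$, notes $|(S-x)\cap C|\geq k+1$, picks $z\in(S-x)\cap C$ with minimal closed $H'$-neighbourhood, and argues that any added neighbour $u$ of $z$ would be an added neighbour of all $\geq k+1$ vertices of $(S-x)\cap C$; since their labels are pairwise disjoint this forces $u$ into $\geq k+1$ of the sets $\mathbb{N}'_i$, a contradiction. So $z$ has no added edges, and because $z\in C$ its true-twin clone keeps $H$ threshold for free. You instead count labels directly: each $\mathbb{N}'_i$ meets the clique $S-x$ in at most one vertex, so at most $k$ members of $S-x$ are nonprobes and at least two, $y_1,y_2$, are probes; cloning the probe $y_1$ yields no added edges immediately, but you then have to verify threshold-ness separately, which you do with the chain characterisation and the second probe $y_2$. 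Both arguments extract the same pigeonhole from the bound $|S|\geq k+3$; yours trades the paper's split/minimality machinery for a quicker identification of the template, at the price of the extra threshold-ness check, while the paper pays up front (via minimality) and collects threshold-ness gratis from $z\in C$.
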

\begin{proof}
If $G$ is $k$-probe threshold then so is $G-x$ for any vertex $x$.
Let $x \in S$ and assume that $G-x$ is a $k$-probe threshold graph.
Let $H$ be an embedding of $G-x$.
First assume that $S$ is a false module with at least three vertices.
Let $y \in S-x$. If $y$ is in the independent set, then we can let
$x$ be a copy of $y$. Assume that all vertices of $S-x$ are in the
clique of $H$. Since $S-x$ has at least two vertices, they must be
nonprobes. We can let $x$ be a copy of either of them.

\noindent
Assume $S$ is a true module with at least $k+3$ vertices.
Then at least $k+1$ vertices are in the clique $C$ of $H$.
Let $z$ be a vertex of $S \cap C$ with a minimal
closed neighborhood in $H$.
Assume that $z$ has a neighbor $u$ in $H$
which is not a neighbor of $z$ in $G$. Then $u$ is a neighbor
of every vertex of $S \cap C$ in $H$, but not in $G$. Since every
pair of vertices $a,b \in S$ is adjacent in $G$, $L(a) \perp L(b)$.
It follows that $u$ must be in at least $k+1$ independent
sets, which is a contradiction. Thus $N_H(z)=N_G(z)$, and
we can let $x$ be a copy of $z$.
\qed\end{proof}

\begin{definition}
A vertex $x$ is {\em maximal\/} if there exists
no $(y) \neq (x)$ with $(x) \preceq (y)$ and there
exists no $[y] \neq [x]$ with $[x] \preceq [y]$.
\end{definition}

\begin{lemma}
\label{bound on max}
Assume that $G$ is a $k$-probe threshold graph
without $k$-probe module. Then there are
at most $2^{k+1}+k$ maximal vertices.
\end{lemma}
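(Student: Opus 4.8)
The plan is to fix a well-linked witness $\mathcal{N}$ (which exists by the lemma preceding Theorem~\ref{max nonprobes}) together with an embedding $H$, and then to bound the maximal vertices by sorting them according to their label $L(\cdot)\in\{0,1\}^k$. The single fact that drives the argument is that a probe $p$, i.e.\ a vertex with $L(p)=\mathbf{0}$, satisfies $N_H(p)=N_G(p)$: any edge of $H$ incident with $p$ that is not an edge of $G$ would force $p$ into some $\mathbb{N}_i$, which is impossible. Moreover, by Theorem~\ref{chain char} the vertices of $H$ form a chain, so for any pair $x,y$ we may assume, after possibly swapping them, that $N_H(x)\subseteq N_H[y]$; this is exactly the hypothesis needed to invoke Theorem~\ref{max nonprobes}.

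First I would treat the maximal vertices carrying a fixed nonzero label $\ell\neq\mathbf{0}$. Any two such vertices $x,y$ share a $1$-coordinate of $\ell$, hence lie together in some $\mathbb{N}_i$ and are therefore nonadjacent in $G$. Applying Theorem~\ref{max nonprobes} to the pair ordered so that $N_H(x)\subseteq N_H[y]$ gives $(x)\preceq(y)$, since $L(x)=L(y)$ forces $L(x)\geq L(y)$. Maximality of $x$ then forces $(x)=(y)$, so the maximal vertices with label $\ell$ are pairwise false twins and form a false module; the absence of a $k$-probe module caps this at $2$ vertices. Summing over the $2^k-1$ nonzero labels bounds this contribution by $2^{k+1}-2$.

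The remaining maximal vertices are probes, and this is where the main work lies. Here I would first show that two \emph{adjacent} maximal probes are true twins and two \emph{nonadjacent} maximal probes are false twins. For the nonadjacent case this is again Theorem~\ref{max nonprobes}, with $L(x)=L(y)=\mathbf{0}$. For the adjacent case, using $N_H=N_G$ on probes together with $N_H(x)\subseteq N_H[y]$ yields $N_G[x]\subseteq N_G[y]$, that is $[x]\preceq[y]$, and maximality gives $[x]=[y]$. A short consistency check then shows the set of maximal probes is homogeneous: if a probe were adjacent to one maximal probe (a true twin) and nonadjacent to another (a false twin), the latter two would be adjacent but neither true nor false twins, a contradiction; every mixed configuration reduces to this one. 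Hence the maximal probes form a single true module or a single false module, which the no-$k$-probe-module hypothesis bounds by $k+2$. Adding the two contributions gives $2^{k+1}-2+(k+2)=2^{k+1}+k$.

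I expect the probe case to be the principal obstacle. The clean identity $N_H(p)=N_G(p)$ for probes is easy, but the delicate point is the homogeneity argument ruling out a mixture of true- and false-twin pairs among maximal probes: it is precisely this rigidity that lets a single module bound of $k+2$ replace two separate counts and makes the additive term $k$, rather than something larger, appear in the final estimate.
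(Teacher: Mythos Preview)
Your proof is correct and follows the same line as the paper's: partition the maximal vertices by their label, use Theorem~\ref{max nonprobes} together with the chain order of $H$ (and, for adjacent probes, the identity $N_H(p)=N_G(p)$) to see that each nonzero-label class is a false module while the probe class is a single true or false module, and then invoke the no-$k$-probe-module hypothesis to obtain $2(2^k-1)+(k+2)=2^{k+1}+k$. The only cosmetic difference is that the paper organises the same count through the ``levels'' $M_0,M_1,\dots$ of the embedding $H$ and their ``label-sets'' (which are automatically modules), whereas you derive the twin relations directly; your homogeneity step for the maximal probes is exactly what replaces the paper's observation that all maximal vertices of a given label lie in one level and hence form either a true or a false module.
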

\begin{proof}
Consider a well-linked
embedding $H$. By Theorem~\ref{chain char} there is a
chain order of its vertices. Let $M_0,M_1,\dots$ be the
equivalence classes in $H$ of vertices with the same
open or closed neighborhoods.
Assume they are ordered such that $N[x_i] \supseteq N(x_{i+1})$
for each $x_i \in M_i$ and $x_{i+1} \in M_{i+1}$, for
$i=0,1,\dots$. Thus if $H$ is connected, $M_0$ is the set of
universal vertices in $H$. We call these equivalence classes
$M_0,M_1,\dots$ the
{\em levels\/} of the embedding. Thus a level contained
in the clique induces a $k$-probe clique in $G$
and a level contained
in the independent set induces an independent set in $G$.

\noindent
Consider the partition of each level $M_s$
into sets of vertices with
the same label. We call the sets of the partition of
a level $M_s$ the {\em label-sets\/} of $M_s$. Notice that
each label-set is a module in $G$. Since there is no
$k$-probe module, each label-set of nonprobes has at most
2 vertices and the label-set of probes has at most $k+2$
vertices. Thus
\[ |M_s| \leq 2(2^k-1)+(k+2)= 2^{k+1} + k. \]

By Theorem~\ref{max nonprobes} a vertex $x \in M_s$ is maximal
if it has a label $L(x)$ such that
all other label-sets $L^{\prime} \leq L(x)$ in
$M_0,\dots,M_s$ are empty. It follows that there are at most
$\sum_{i=0}^k \binom{k}{i}=2^k$ label-sets of maximal vertices,
at most $2^k-1$ of maximal nonprobes,
each containing at most 2 elements, and
at most one label-set
of maximal probes, containing at most $k+2$ elements.
Thus the number of maximal elements is bounded
by $2^{k+1}+k$.
\qed\end{proof}

\begin{lemma}
\label{super universal}
Assume $G$ is a $k$-probe threshold graph without isolated
vertices and without $k$-probe module.
There exists a set $\Upsilon$, of size $|\Upsilon| \leq 2^{2(k+1)}$
such that any well-linked embedding of $G$ has its set of universal
vertices
$M_0 \subseteq \Upsilon$. This set $\Upsilon$ can be computed in
linear time.
\end{lemma}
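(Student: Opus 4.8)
The plan is to give a witness-\emph{independent} description of the vertices that can serve as a universal vertex in \emph{some} well-linked embedding, and then to bound their number using the no-$k$-probe-module hypothesis together with the count of maximal vertices from Lemma~\ref{bound on max}. First I would reduce to the connected case. Since $G$ has no isolated vertex and every embedding $H$ is obtained from $G$ by adding edges on the same vertex set, $H$ has no isolated vertex either; a threshold graph without an isolated vertex is connected (it has no induced $2K_2$), so $H$ is connected and $M_0$ is exactly its set of universal vertices, as in Lemma~\ref{bound on max}. I would then record the basic local condition: a vertex $\omega$ is universal in a well-linked embedding $H$ with witness $\mathcal{N}$ exactly when every $G$-nonneighbour $y$ of $\omega$ shares an independent set with $\omega$, that is, $\lnot\,(L(y)\perp L(\omega))$.

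Next I would extract a necessary condition phrased purely in $G$. The independent sets of $\mathcal{N}$ that contain $\omega$ consist of $\omega$ together with pairwise nonadjacent nonneighbours of $\omega$, and they must cover all of $V-N_G[\omega]$; hence, if $\omega$ is universal in some well-linked embedding, then $G[V-N_G[\omega]]$ must be $k$-colourable. This already discards many vertices, but to reach the stated bound I would group the surviving candidates by neighbourhood equivalence: by the open-neighbourhood relation $\equiv$ (false twins) and the closed-neighbourhood relation $\equiv^{\prime}$ (true twins). Each such class is a module of $G$, so under the no-$k$-probe-module hypothesis a false class has at most $2$ vertices and a true class at most $k+2$ vertices; in particular every candidate class has at most $2^{k+1}$ vertices, since $k+2\le 2^{k+1}$.

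The key step, and the main obstacle, is to bound the \emph{number} of neighbourhood classes that can contain a candidate, uniformly over all embeddings. The difficulty is that $M_0$ genuinely depends on the chosen embedding, so one must control the \emph{union} of all possible sets $M_0$ at once rather than a single level of a single embedding. I would attack this by mirroring the label-set count in the proof of Lemma~\ref{bound on max}: using the chain order of a threshold embedding (Theorem~\ref{chain char}) and the correspondence between the orders $\preceq$ and the labels on nonadjacent pairs (Theorem~\ref{max nonprobes}), I would show that a candidate is pinned, through its label and the label of the maximal vertex that dominates it in the appropriate order, into at most $2^{k+1}$ candidate classes. Combined with the per-class bound this yields
\[
|\Upsilon|\;\le\;2^{k+1}\cdot 2^{k+1}\;=\;2^{2(k+1)}.
\]
I expect the delicate point to be proving the bound on the number of classes: everything else is routine counting once a candidate is forced to be a true or false twin of a vertex comparable to a maximal vertex.

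Finally, for the linear-time claim I would compute the true- and false-twin classes of $G$ by partition refinement on neighbourhoods, together with the maximal vertices (which number at most $2^{k+1}+k$ by Lemma~\ref{bound on max}); all of this is available in linear time. The set $\Upsilon$ is then the union of those twin classes that pass the local $k$-colourability test and the label condition of the preceding paragraph, and by construction it contains $M_0$ for every well-linked embedding while respecting the size bound $2^{2(k+1)}$.
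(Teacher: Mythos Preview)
Your outline correctly isolates the reduction to the connected case and the per-class size bound, but the step you yourself flag as delicate---bounding the number of twin classes that can meet $M_0$ over \emph{all} well-linked embeddings---is not actually carried out, and your proposed attack does not close it. Mirroring the label-set count from Lemma~\ref{bound on max} bounds the number of label-sets of $M_0$ for one fixed embedding; it says nothing about the union over all well-linked witnesses, and different witnesses may place their $M_0$ in different twin classes. Your description of $\Upsilon$ is also not witness-independent as written, since the ``label condition of the preceding paragraph'' refers to labels, which come from a witness. Finally, the $k$-colourability test on $G[V-N_G[\omega]]$ is not a linear-time operation (it is NP-hard for $k\ge 3$ on general inputs), so the algorithmic claim fails as stated.

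The paper sidesteps all three issues with a single constructive idea that your proposal is missing: build $\Upsilon$ from $G$ alone by \emph{iterated peeling of maximal vertices}. Set $\Upsilon=\varnothing$; then $2^k$ times compute the set of maximal vertices of the current graph, add it to $\Upsilon$, and delete it. For any fixed well-linked embedding, a vertex of $M_0$ whose label is $\leq$-minimal among the label-sets of $M_0$ still present is maximal in the current graph (this is exactly the maximality criterion established in the proof of Lemma~\ref{bound on max}), so each round empties at least one label-set of $M_0$; since $M_0$ has at most $2^k$ label-sets, $2^k$ rounds absorb all of $M_0$ into $\Upsilon$. Each round contributes at most $2^{k+1}+k$ vertices by Lemma~\ref{bound on max}, whence $|\Upsilon|\le 2^k(2^{k+1}+k)\le 2^{2(k+1)}$. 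This construction is explicit, witness-independent, and consists of a constant number of maximal-vertex computations---no colouring test is needed.
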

\begin{proof}
Since $G$ has no isolated vertices, $H$ has a set of
universal vertices $M_0$. Start with
$\Upsilon=\es$. Repeatedly compute the set of maximal
vertices in $G$, add them to $\Upsilon$,
and delete them from the graph. After at most
$2^k$ repetitions, each label-set of $M_0$ is contained
in $\Upsilon$.
Since each set of maximal elements has at most $2^{k+1}+k$
vertices,
\[ |\Upsilon| \leq 2^k(2^{k+1}+k) \leq 2^{2k+1}+ 2^{2k} \leq 2^{2(k+1)}.\]
\qed\end{proof}

\begin{definition}
A {\em probe universal set\/} is a set $U$ of labeled vertices
such that for every vertex $x \not\in U$, there is a label
for $x$ such that $U+x$ is a partitioned probe clique.
\end{definition}

\begin{lemma}
\label{extension}
Let $U$ be a probe universal set and let $x \not\in U$ be a
vertex with minimal neighborhood such that
$U^{\prime}=U+N(x)$ is probe
universal with the same number of nonempty label-sets as $U$.
Then there exists an
embedding such that $U$ is universal if and only if there exists
an embedding such that $U^{\prime}$ is universal.
\end{lemma}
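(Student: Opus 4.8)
The plan is to prove the two implications separately; the reverse direction is immediate, and the forward direction is an exchange argument that rebuilds the embedding so that the enlarged core $U'$ sits at the top of the chain. For the direction ``$U'$ universal $\Rightarrow$ $U$ universal'' there is nothing to do: if $H$ is an embedding in which every vertex of $U'$ is universal, then, since $U \subseteq U'$, every vertex of $U$ is universal in the very same $H$. I would dispose of this direction in one line.

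For the forward direction I would start from an embedding $H$ in which $U$ is universal, together with its labeling, and build a new embedding $H'$ in which all of $U' = U + N(x)$ is universal. By Theorem~\ref{chain char} the threshold graph $H$ has a chain order, and since $U$ is universal it occupies the large-neighborhood end of this chain. A first observation I would record is that $N(x)$ already lies high in that chain: because $H$ is an embedding we have $N(x) = N_G(x) \subseteq N_H(x)$, and in a threshold graph $N_H(x)$ is an upward-closed segment of the chain, so every vertex of $N(x)$ is among the large-neighborhood vertices of $H$. I would then define $H'$ on the same vertex set by keeping the induced embedding $H[V-U']$ unchanged and placing $U'$ on top as a universal clique, making every vertex of $U'$ adjacent to all others and turning $U'$ into a clique. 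Since adjoining universal vertices to a threshold graph again gives a threshold graph, and since $U'$ is a probe clique by hypothesis, $H'$ is a threshold graph; moreover $G \subseteq H'$, because the edges inside $U'$ and inside $V-U'$ are inherited while every $G$-edge across the two parts is present once $U'$ is universal. By Theorem~\ref{char threshold} this $H'$ is the embedding we want, provided its non-edges can be legitimately covered.

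The heart of the argument, and the step I expect to be the main obstacle, is to realize $H'$ as a probe embedding within the same $k$ independent sets using one consistent label per vertex. Three families of non-edges must be covered simultaneously: those inside $U'$, those inside $V-U'$, and the non-$G$-edges running between the two parts. The first are covered because $U'$ is a probe clique; the second are covered by the labels inherited from $H$; and for each $w \notin U'$ the non-$G$-edges from $w$ to $U'$ are coverable because $U'$ is probe universal, so that $U'+w$ is a probe clique. The genuine difficulty is reconciling these covers: a vertex of $N(x)$ may carry a different label in the given probe-universal set $U'$ than it did as an ordinary vertex of $H$, and a boundary vertex $w$ must respect at once its inherited obligations inside $V-U'$ and its new obligation towards $U'$. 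This is exactly where the two hypotheses on $x$ enter. Because $U'$ is probe universal \emph{with the same number of nonempty label-sets as $U$}, enlarging the core introduces no new independent set, so the label that certified $U+w$ as a probe clique still certifies $U'+w$; no boundary vertex is pushed beyond the at most $k$ sets already in use, and the inherited labeling on $V-U'$ stays admissible. The minimality of $N(x)$ ensures that $x$, and hence $N(x)$, sits as high as possible in the chain among the eligible vertices, so that no vertex of $V-U'$ has to be promoted together with $N(x)$ and the induced threshold structure on $V-U'$ is left intact. With these two facts in place the three covering obligations are met by a single labeling on at most $k$ independent sets, $U'$ is universal in $H'$, and the forward implication follows.
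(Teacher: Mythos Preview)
Your argument is correct in outline and is essentially an unpacking of the paper's one-line proof, which reads in its entirety: ``By definition, the label-sets of $U'$ are modules that extend the label-sets of $U$. This proves the lemma.'' That single observation is exactly what makes your reconciliation step go through: if $u\in U$ and $v\in N(x)$ carry the same label $L$, then for every $w\notin U'$ one has $(u,w)\in E(G)\iff (v,w)\in E(G)$, since any label for $w$ witnessing that $U'+w$ is a partitioned probe clique must be disjoint from $L$ whenever $w$ is adjacent to one of $u,v$ and must meet $L$ whenever it is non-adjacent to one of them. Hence each new vertex of $N(x)$ is, outside $U'$, a twin of some vertex already in $U$, and promoting it to universal costs nothing. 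Where the paper records this module property and stops, you rebuild $H'$ explicitly and check the three families of filled non-edges; both routes land in the same place, yours being the constructive expansion of the paper's compressed statement.

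One remark: your appeal to the minimality of $N(x)$ is not needed here. The paper's proof never invokes it, and the module argument above makes no reference to it; the hypothesis that $U'$ is probe universal with the same nonempty label-sets as $U$ already forces every vertex of $N(x)$ to duplicate, outside $U'$, some vertex of $U$. The minimality clause in the lemma is there to drive the algorithm's greedy choice in the surrounding theorem, not to secure the equivalence in this lemma, and your argument would be tighter if you dropped that paragraph and let the same-label-sets hypothesis do all the work.
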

\begin{proof}
By definition, the label-sets of $U^{\prime}$ are modules that extend
the label-sets of $U$.
This proves the lemma.
\qed\end{proof}

\begin{theorem}
For each $k$, there exists an $O(n^2)$-time algorithm for the recognition
of $k$-probe threshold graphs.
\end{theorem}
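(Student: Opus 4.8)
The plan is to convert the structural lemmas of this section into a level-by-level peeling algorithm that imitates the partitioned procedure of Theorem~\ref{fixed}, while simultaneously discovering a suitable witness. First I would preprocess $G$. Deleting isolated vertices is safe by Lemma~\ref{isolated}, and collapsing every $k$-probe module down to threshold size ($2$ vertices for a false module, $k+2$ for a true module) is safe by the module-reduction lemma; after these reductions $G$ is connected, has no isolated vertex, and contains no $k$-probe module. On such a reduced graph, Theorem~\ref{chain char} guarantees that every embedding $H$ admits a chain order whose levels $M_0,M_1,\dots$ are the classes of equal open or closed neighborhood, with $M_0$ the universal level. By the well-linkedness lemma we may moreover assume throughout that the witness we seek is well-linked, so that Theorem~\ref{max nonprobes} ties the neighborhood order $\preceq$ to the label order $\geq$.

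The heart of the algorithm is to identify $M_0$ together with the labels of its vertices. By Lemma~\ref{super universal} the universal level of any well-linked embedding lies inside a set $\Upsilon$ with $|\Upsilon|\leq 2^{2(k+1)}$ that is computable in linear time. Since both $\Upsilon$ and the label alphabet $\{0,1\}^k$ have constant size, there are only constantly many candidate labelled subsets $U\subseteq\Upsilon$, and I would enumerate exactly those that are probe universal sets in the sense of the preceding definition. For a correct guess, $U$ is precisely the labelled universal level of some embedding, and each of its vertices may be stripped off while preserving $\TH$-width, exactly as a probe universal vertex is removed in Theorem~\ref{fixed}. Peeling $U=M_0$ and iterating then uncovers $M_1,M_2,\dots$ in order, and correctness of the whole sequence follows from Theorem~\ref{char}.

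The main obstacle is keeping the total branching bounded. A fresh guess of the labelled universal level at each of the $\Theta(n)$ levels would multiply to an exponential number of guesses, which I must avoid. Here Lemma~\ref{extension} is decisive: once the labels of a probe universal set $U$ are fixed, it may be grown canonically to $U'=U+N(x)$ for a neighborhood-minimal $x$ without changing the number of nonempty label-sets nor the existence of an embedding with $U$ universal. This lets me propagate a single top-level labelling deterministically down the chain, since the label-sets of each successive level are forced as modules extending those already committed. Thus only the constant-size initial enumeration over $\Upsilon$ involves genuine choice; everything below the top level is determined. The hardest part to make rigorous is precisely this claim — that committing to the top-level labelling leaves no real freedom at lower levels — so that a constant number of global guesses suffices rather than one per level.

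For the running time I would argue that, after the constant-size initial enumeration, the computation is deterministic: there are $O(n)$ levels, each identified and removed in $O(n)$ time (computing maximal vertices, testing the probe-clique condition on the candidate level, and applying the extension step of Lemma~\ref{extension}), while the preprocessing reductions are charged once. Summing over the levels gives the claimed $O(n^2)$ bound for every fixed $k$.
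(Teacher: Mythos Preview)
Your proposal has the right ingredients but contains a genuine gap at exactly the point you flag as ``the hardest part to make rigorous.'' You claim that after the single initial enumeration over labelled subsets of $\Upsilon$, Lemma~\ref{extension} lets you propagate the labelling deterministically down the entire chain. But Lemma~\ref{extension} has a \emph{hypothesis}: it applies only when there is a neighborhood-minimal $x$ for which $U'=U+N(x)$ is probe universal \emph{with the same number of nonempty label-sets as $U$}. Nothing guarantees such an $x$ always exists. Whenever the next clique level of the embedding carries a label pattern not already present in $U$, every candidate $N(x)$ would introduce a new label-set, the greedy extension is blocked, and you must branch again. Your assertion that ``only the constant-size initial enumeration \dots\ involves genuine choice'' is therefore unsupported, and as stated the algorithm could branch $\Theta(n)$ times.

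The paper's proof closes this gap with one extra observation you are missing: whenever the greedy step of Lemma~\ref{extension} fails, recompute $\Upsilon$ in the graph minus the current probe universal set and branch over its constant number of labelled subsets as before---but note that every such non-greedy extension strictly increases the number of nonempty label-sets of $U$, and that number is bounded by $2^k$. Hence there are at most $2^k$ branching events in total, each over a constant number of choices, so the overall branching factor is a constant depending only on $k$. (Incidentally, your write-up also mixes two pictures---``peeling off $M_0$'' versus ``growing $U$ via Lemma~\ref{extension}''---which are opposite operations; the paper works entirely in the second picture, growing the probe universal set while deleting the minimal-neighborhood vertices $x$, rather than stripping levels from the top.)
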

\begin{proof}
We may assume that $G$ has no $k$-probe module.
By Lemma~\ref{super universal} there exists a constant number of
feasible universal sets. By Lemma~\ref{extension}, if there exists a
vertex $x$ that can be labeled such that $N(x)$ extends
the universal set in a way that does not increase the number of nonempty
label-sets
in the universal set,
then the algorithm can greedily extend the universal set with $N(x)$.
Next the algorithm removes the vertex $x$ and tries to find another
greedy extension.

If there are no more greedy extensions,
the algorithm computes the set $\Upsilon$ as in
Lemma~\ref{super universal} in the graph minus
the probe universal set, and
chooses one of the constant number of subsets
as an extension of the probe universal set.
Notice that
there can be at most $2^k$ extensions that increase the number
of label-sets.

Since the computation of maximal vertices can be done in $O(n^2)$ time,
the algorithm can be implemented to run in $O(n^2)$ time.
\qed\end{proof}

\begin{remark}
Perhaps it is a bit surprising that we don't have to
treat the different components of the graph separately.  
\end{remark}

\section{Concluding remarks}

The recognition problem of probe interval graphs was introduced by
Zhang {\em et al.\/}~\cite{kn:zhang,kn:mcmorris}. This problem stems
from the physical mapping of chromosomal DNA of humans and other
species. Since then probe graphs of many other graph classes have
been investigated by various authors. In this paper we generalized
the concept to the graph-class-width parameters. So far, we have
limited our research to classes of graphs that have bounded
rankwidth.

In~\cite{kn:hung}, we derived a fixed-parameter algorithm that solves a  
similar problem for the class of 
trivially perfect graphs. It is well-known that every threshold 
graph is trivially perfect. Obviously, this does not imply that the 
algorithm for trivially perfect graphs can be used for threshold graphs.   
In fact, a    
similar, elegant solution as the one that we obtained in this 
paper can not work for threshold graphs. 

For the classes of  
blockgraphs, threshold graphs, trivially perfect graphs, and
cographs we were able to show that the width parameter is
fixed-parameter tractable~\cite{kn:chang,kn:hung,kn:hung1}. One of
the classes for which this is still open is the class of
distance-hereditary graphs. We are unaware of a monadic second-order 
formulation that describes the distance-hereditary width.
Consider a decomposition tree of bounded rankwidth. The `twinset' of a 
branch is defined as 
the subset of vertices that are mapped to the leaves of that branch,  
and that have neighbors in the rest of the graph (outside the branch). 
It is not difficult to 
show that for bounded rankwidth, the graphs that arise as 
twinsets constitute a class of 
graphs that is characterized by a finite collection of forbidden 
induced subgraphs. (For rankwidth one this is the class of 
cographs.) The same holds true for graphs of bounded DH-width. 
So far, we have not been able to describe the class of graphs as 
tree-extensions of these twinsets.  

\section{Acknowledgement}

Ton Kloks is currently a guest of the Department of
Computer Science and Information Engineering of National Chung
Cheng University.
He gratefully acknowledges the funding for this research
by the National Science Council
of Taiwan.

\end{document}